\theoremstyle{plain}
\newtheorem{theorem}{Theorem}[section]
\newtheorem{corollary}[theorem]{Corollary}
\newtheorem{lemma}[theorem]{Lemma}
\newtheorem{proposition}[theorem]{Proposition}
\theoremstyle{remark}
\newtheorem{remark}[theorem]{Remark} 
\theoremstyle{definition}
\newtheorem{definition}[theorem]{Definition}
\newtheorem{example}[theorem]{Example} 
\DeclareMathOperator*{\interior}{int}
\DeclareMathOperator*{\fix}{Fix}
\DeclareMathOperator*{\id}{Id}
\DeclareMathOperator*{\Argmin}{Argmin}
\DeclareMathOperator*{\argmax}{argmax}
\begin{document}



\title{{\itshape Outer Approximation Methods for Solving Variational Inequalities in Hilbert Space}}

\author{
Aviv Gibali$^{\rm a}$, Simeon Reich$^{\rm b}$ and Rafa\l\ Zalas$^{\rm b}$$^{\ast}$\thanks{$^\ast$Corresponding author: Rafa\l\ Zalas, Email: rzalas@tx.technion.ac.il}\\\vspace{6pt}  $^{a}${\em{Department of Mathematics, ORT Braude College, 2161002 Karmiel, Israel;}}
$^{b}${\em{Department of Mathematics, The Technion - Israel Institute of Technology, 3200003 Haifa, Israel.}}\\
\received{v4.1 released February 2014} }

\maketitle

\begin{abstract}
In this paper we study variational inequalities in a real Hilbert space, which are governed by a strongly monotone and Lipschitz continuous operator $F$ over a closed and convex set $C$. We assume that the set $C$ can be outerly approximated by the fixed point sets of a sequence of certain quasi-nonexpansive operators called cutters. We propose an iterative method the main idea of which is to project at each step onto a particular half-space constructed by using the input data. Our approach is based on a method presented by Fukushima in 1986, which has recently been extended by several authors. In the present paper we establish strong convergence in Hilbert space. We emphasize that to the best of our knowledge, Fukushima's method has so far been considered only in the Euclidean setting with different conditions on $F$. We provide several examples for the case where $C$ is the common fixed point set of a finite number of cutters with numerical illustrations of our theoretical results.

\begin{keywords}Common fixed point; iterative method; quasi-nonexpansive operator; subgradient projection; variational inequality.
\end{keywords}

\begin{classcode}
47H09; 47H10; 47J20; 47J25; 65K15.
\end{classcode}

\end{abstract}

\section{Introduction}

Let $(\mathcal H, \langle\cdot,\cdot\rangle)$ be a real Hilbert space with induced norm $\|\cdot\|$.
The \textit{variational inequality} VI($F$, $C$) governed by a monotone operator $ F%
\colon\mathcal H\rightarrow\mathcal H$ over a nonempty, closed and
convex set $C\subseteq\mathcal H$ is formulated as the following problem:
find a point $x^*\in C$ for which the inequality%
\begin{equation}
\langle F x^*, z-x^*\rangle\geq 0
\label{eq:vip}
\end{equation}
holds true for all $z\in C$. In the last decades VIs have been extensively studied by many authors; see, for example, Facchinei's and Pang's two-volume book \cite{FacchineiPang2003}, the review papers by Xiu and Zhang \cite{XiuZhang2003}, and Noor \cite{Noor2004}, as well as a recent one by Chugh and Rani \cite{ChughRani2014}.

It is not difficult to see, compare with \cite[Theorem 1.3.8]{Cegielski2012}, that $x^*$ solves VI($F$,$C$) if and only if it satisfies the fixed point equation $x^*=P_C(x^*-\lambda Fx^*)$ for some $\lambda>0$.  Moreover, if $F$ is $L$-Lipschitz continuous and $\alpha$-strongly monotone, then the operator $P_C(\id -\lambda F)$ becomes a strict contraction for any $\lambda\in (0, \frac{2\alpha}{L^2})$; see, for example, either \cite[Theorem 46.C]{Zeidler1985} or \cite[Theorem 5]{CegielskiZalas2013a}. Therefore, by Banach's fixed point theorem, the VI($F$, $C$) has a unique solution. Moreover, in order to approximate this solution, one could try to apply a fixed point iteration of the form
\begin{equation}\label{eq:def:GP}
x^0\in\mathcal H;\qquad x^{k+1}:=P_C(x^k-\lambda F x^k), \text{ for } k=0,1,2,\ldots,
\end{equation}
which by the same argument is known to converge strongly to $x^*$. This method appeared in the literature as the \textit{gradient projection method} in the context of minimization and was introduced by Goldstein \cite{Goldstein1964}, and Levitin and Polyak \cite{LevitinPolyak1966}.

The gradient projection method can be particularly useful when estimates of the constants $L$, $\alpha$, and thereby $\lambda$, are known in advance, and when the set $C$ is simple enough to project onto. However, in general, this does not have to be the case and therefore the efficiency of the method can be essentially affected. To overcome the first obstacle, one can replace $\lambda$ with an unknown estimate by a null, non-summable sequence $\{\lambda_k\}_{k=0}^\infty\subseteq[0,\infty)$. To overcome the other difficulty, one can replace the metric projection onto $C$ by a sequence of metric projections onto certain half-spaces $H_k$ containing $C$, which should be simpler to calculate. This leads to the \textit{outer approximation method}
\begin{equation}\label{eq:def:xk:intro}
x^0\in\mathcal H;\qquad x^{k+1}:=R_k(x^k-\lambda_k F x^k), \text{ for } k=0,1,2,\ldots,
\end{equation}
where $R_k:=\id+\alpha_k(P_{H_k}-\id)$ and $\alpha_k\in[\varepsilon,2-\varepsilon]$ is the user-chosen relaxation parameter. The characteristics and, in particular, the computational cost of such methods depend to a large extent on the construction of the half-space $H_k$. A common feature of these methods is that the boundary of $H_k$ should separate $x^k$ from $C$ whenever $x^k\notin C$ and $H_k=\mathcal H$ otherwise. Such an approach has been successfully applied several times and can be found in the literature.

For instance, Fukushima \cite{Fukushima1986} defined $H_k:=\{z\in\mathbb R^n\mid f(x^k)+\langle g_f(x^k), z-x^k\rangle \leq 0\}$, assuming that $C$ is the sublevel set at level $0$ of a convex function $f\colon\mathbb R^n\rightarrow \mathbb R$, that is, $C=\{x\in\mathbb R^n\mid f(x)\leq 0\}$, and that for each $k=0,1,2,\ldots,$ the vector $g_f(x^k)$ is a subgradient of $f$ at $x^k$, that is, $g_f(x^k)\in\partial f(x^k)$.

Censor and Gibali \cite{CensorGibali2008} proposed a similar approach, but with a more flexible choice of the half-space $H_k$. In this case the boundary of $H_k$ should separate a ball $B(x^k,\delta d(x^k,C))$ from $C$, where $0<\delta\leq 1$. It turns out that the boundary of Fukushima's half-space $H_k$, defined via a subgradient of $f$, separates $B(x^k, \delta f(x^k))$ from $C$ for some $\delta \in (0,1]$; see \cite[Lemma 2.8]{CensorSegal2009}.

Cegielski \textit{et al.} \cite{CegielskiGibaliReichZalas2013} constructed the half-space $H_k$ by exploiting the structure of the set $C$, which in their case was represented as a fixed point set $C=\fix  T:=\{z\in\mathcal \mathbb R^n\mid z=Tz\}$ of an operator $T\colon\mathbb R^n\rightarrow \mathbb R^n$. This operator was assumed to be a weakly regular cutter, that is, $T-\id$ is demi-closed at 0 (see Definition 2.9) and $\langle x-Tx, z-Tx \rangle \leq 0$ for all $x\in \mathcal H$ and $z\in \fix T$. Here $H_k:=\{z\in\mathcal H\mid \langle x^k-Tx^k,z-Tx^k\rangle \leq 0\}$, where the separation of $x^k$ from the boundary of $H_k$ was assured by restricting the choice of $T$ to cutters. In particular, by setting $T$ to be a subgradient projection $P_f$, which is also a cutter (see Example \ref{ex:def:SubPro}), we recover Fukushima's half-space. In addition, one can easily show that $P_f$ is weakly regular whenever the dimension of $\mathcal H$ is finite (see Example \ref{ex:SubProjandDCPrinciple}). Moreover, this concept is more general than the one from \cite{CensorGibali2008}; a detailed explanation can be found in \cite[Example 2.22]{CegielskiGibaliReichZalas2013}.

In this direction, Gibali \textit{et al.} \cite{GibaliReichZalas2015} have recently considered VIs with a subset $C$ outerly approximated by an infinite family of cutters $T_k\colon\mathbb R^n\rightarrow\mathbb R^n$ in the sense that \begin{equation}
C\subseteq\bigcap_{k=0}^\infty\fix  T_k.
\label{eq:outerAppxCFixTk}
\end{equation}
In the definition of $H_k$ the constant operator $T$ was replaced by a sequence of operators $\{T_k\}_{k=0}^\infty$, that is, $H_k:=\{z\in\mathcal H\mid \langle x^k-T_kx^k,z-T_kx^k\rangle \leq 0\}$. A general regularity condition \cite[Condition 3.6]{GibaliReichZalas2015} which is somewhat related to weak regularity and therefore to the demi-closedness principle has been imposed on the family of $T_k$'s. In particular, for $C$ defined as the solution set of the common fixed point problem for a finite family of weakly regular cutters $U_i\colon\mathbb R^n\rightarrow\mathbb R^n$, $i\in I=\{1,\ldots,m\}$, the operators $T_k$ were defined by using either a cyclic ($T_k=U_{[k]}$, $[k]= (k \text{ mod } m) +1$), simultaneous ($T_k=\sum_{i\in I_k}\omega_i^k U_i$) or a composition ($T_k=(\id+\prod_{i\in I_k}U_i$)/2) algorithmic operators.

Another slightly different, but still a strongly related approach has been considered by Cegielski and Zalas \cite{CegielskiZalas2013a}, where the following \textit{hybrid steepest descent method} (HSD)
\begin{equation}\label{eq:def:HSD}
z^0\in\mathcal H;\qquad z^{k+1}:=R_kz^k-\lambda_k F R_kz^k, \text{ for } k=0,1,2,\ldots,
\end{equation}
has been investigated for VI($F$,$C$) with a Lipschitz continuous and strongly monotone $F$ defined over a closed and convex $C$ in an infinite dimensional Hilbert space. Here $C$ was assumed to be outerly approximated, like in (\ref{eq:outerAppxCFixTk}), by a sequence of strongly quasi-nonexpansive operators $R_k$ and, in particular, by cutters. The HSD method was originally proposed by Deutsch and Yamada \cite{DeutschYamada1998} in a simpler setting, although its origin goes back to Halpern's paper \cite{Halpern1967} from 1967, where $F=\id-a$. Various instances of the HSD method have been studied in the meantime. For example, Lions \cite{Lions1977}, Wittmann \cite{Wittmann1992}, Bauschke \cite{Bauschke1996} and Slavakis \textit{et al.} \cite{SlavakisYamadaSakaniwa2003} have considered the HSD method with $F=\id-a$. On the other hand, the HSD method for more general $F$ has been investigated by Yamada \cite{Yamada2001}, Xu and Kim \cite{XuKim2003}, Yamada and Ogura \cite{YamadaOgura2004}, Hirstoaga \cite{Hirstoaga2006}, Zeng \textit{et al.} \cite{ZengWongYao2007}, Yamada and Takahashi \cite{TakahashiYamada2008}, Aoyama and Kimura \cite{AoyamaKimura2011}, Zhang and He \cite{ZhangHe2013}, Cegielski and Zalas \cite{CegielskiZalas2014}, Aoyama and Kohsaka \cite{AoyamaKohsaka2014}, Zalas \cite{Zalas2014}, Cegielski \cite{Cegielski2014, Cegielski2015}, and Cegielski and Al-Musallam \cite{CegielskiMusallam2016}.

It turns out that iteration (\ref{eq:def:xk:intro}) can be viewed as the HSD method (\ref{eq:def:HSD}) with $R_k:=\id+\alpha_k(P_{H_k}-\id)$ (see Section \ref{sec:OAM:ConvAnal}). This suggests that similar sufficient conditions for the strong convergence of (\ref{eq:def:HSD}) should apply to (\ref{eq:def:xk:intro}) in the infinite dimensional setting. We emphasize here that convergence results for the iterative method (\ref{eq:def:xk:intro}), to the best of our knowledge, have so far been established in Euclidean space only, also by imposing global conditions on $F$ different than Lipschitz continuity; compare with \cite[Assumption (c)]{Fukushima1986}, \cite[Condition 5]{CensorGibali2008}, \cite[Condition 3.3]{CegielskiGibaliReichZalas2013} and \cite[Condition 3.4]{GibaliReichZalas2015}.

In the present paper we assume that $F$ is Lipschitz continuous and strongly monotone, and that $C$ is outerly approximated (compare with (\ref{eq:outerAppxCFixTk})) by an infinite sequence of cutters $T_k\colon\mathcal H\rightarrow\mathcal H$. The main contribution of our paper is to provide sufficient conditions for the strong convergence of method (\ref{eq:def:xk:intro}) in a general real Hilbert space $\mathcal H$ (Theorem \ref{th:main}). In particular, for $C$ defined as the solution set of the common fixed point problem with respect to a finite family of operators $U_i\colon\mathcal H\rightarrow\mathcal H$, following \cite{GibaliReichZalas2015}, we allow the $T_k$'s to be defined either by cyclic, simultaneous or composition algorithmic operators. Moreover, we permit not only cyclic but also maximum proximity algorithmic operators which are more general than the remotes-set and most-violated constraint case. These results are summarized in Theorems \ref{th:main2} and \ref{th:main3}.

This paper is organized as follows. Section \ref{sec:preliminaries} is a preliminary section, where we recall several definitions, examples and theorems to be used in the rest of the paper. In Subsection \ref{sec:HSD} we discuss convergence properties of the HSD method (\ref{eq:def:HSD}). Section \ref{sec:OAM} contains general convergence results regarding the outer approximation method (\ref{eq:def:xk:intro}), while Section \ref{sec:VIoverCFPP} provides applications of this result to VIs defined over the solution set of a common fixed point problem. In the last section we provide some numerical results which illustrate the validity of our theoretical analysis.

\section{Preliminaries} \label{sec:preliminaries}

Let $C\subseteq\mathcal H$ and $x\in\mathcal H$ be given. If there is a point $y\in C$ such that $\| y-x\|\leq\| z-x\|$ for all $z\in C$, then $y$ is called a \textit{metric projection} of $x$ onto $C$ and is denoted by $P_{C} x $. If $C$ is nonempty, closed and convex, then for any $x\in\mathcal H$, the metric projection of $x$ onto $C$ exists and is uniquely defined; see, for example, \cite[Theorem 1.2.3]{Cegielski2012}. In this case the function $d(\cdot,C):\mathcal H \rightarrow[0,\infty)$ measuring the distance between an arbitrary given $x \in\mathcal H$ and $C$ satisfies $d(x,C)=\| P_{C} x -x\|$.

For a given $U\colon\mathcal H\rightarrow\mathcal H$ and $\alpha
\in (0,\infty)$, the operator $U_{\alpha}:=\id +\alpha(U-\id )$ is called an $\alpha$-\textit{relaxation of} $U$, where by $\id $ we denote the identity operator. We call $\alpha$ a \textit{relaxation parameter}. It is easy to see that for every $\alpha\neq 0$, $\fix U = \fix U_\alpha$, where we recall that $\fix U:=\{z\in\mathcal H\mid Uz=z\}$ is the \textit{fixed point set} of $U$. Usually, in connection with iterative methods, as in (\ref{eq:def:xk:intro}), the relaxation parameter $\alpha$ is assumed to belong to the interval $[\varepsilon,2-\varepsilon]$.

\subsection{Quasi-nonexpansive and nonexpansive operators}\label{sec:QNE}

\begin{definition}\label{def:QNE}
Let $U:\mathcal H\rightarrow\mathcal H$ be an
operator with a fixed point, that is, $\fix U
\neq \emptyset$. We say that $U$ is
\begin{itemize}
\item \textit{quasi-nonexpansive} (QNE) if for all $x\in\mathcal H$ and all $z\in\fix U$,
\begin{equation}
\| U x -z\|\leq\| x-z\|;\label{eq:qne}%
\end{equation}

\item  $\rho$\textit{-strongly quasi-nonexpansive} ($\rho$-SQNE), where $\rho\geq 0$, if for all $x\in\mathcal H$ and all $z\in\fix U$,
\begin{equation}
\| Ux-z\|^2\leq\| x-z\|^2-\rho\| U
x -x\|^2;\label{eq:sqne}
\end{equation}

\item a \textit{cutter} if for all $x\in\mathcal H$ and all $z\in\fix U$,
\begin{equation}
\langle z-U x ,x-U x \rangle\leq 0\label{eq:cutter}.
\end{equation}
\end{itemize}
\end{definition}
See Figure \ref{fig:cutter} for the geometric interpretation of a cutter.

\begin{figure}[htb]
\centering
\includegraphics[scale=0.25]{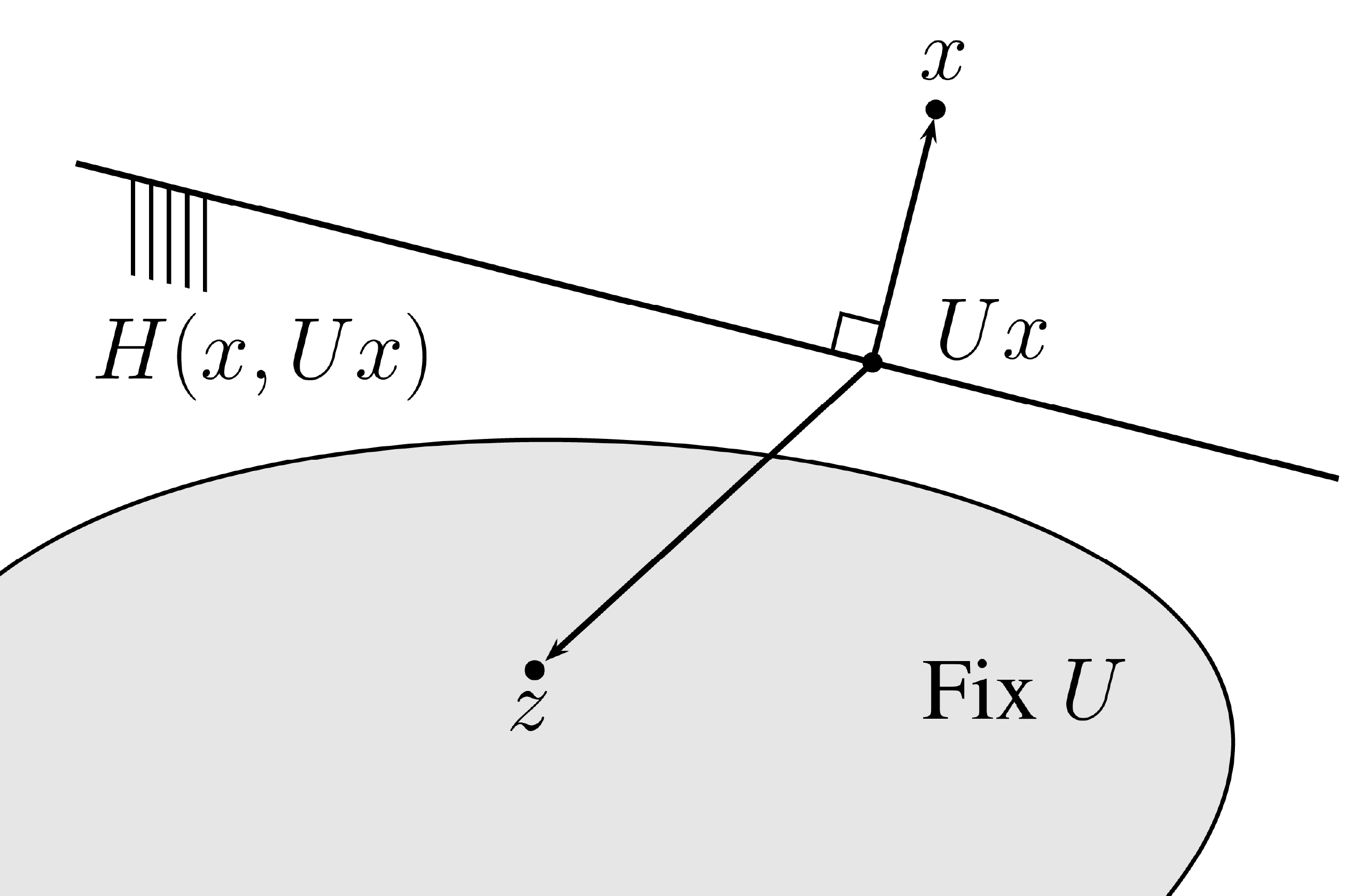}
\caption{Geometric interpretation of a cutter $U$. Note that for every $x\in\mathcal H$ we have $\fix U\subseteq H(x,Ux):=\{z\in\mathcal H\mid \langle z-Ux, x-Ux\rangle\leq 0\}$.}
\label{fig:cutter}
\end{figure}

\begin{definition}\label{def:NE}
Let $U\colon\mathcal H\rightarrow\mathcal H$. We say that $U$ is
\begin{itemize}
\item \textit{nonexpansive} (NE) if for all $x,y\in\mathcal H$,
\begin{equation}\label{eq:def:NE}
\|Ux-Uy\|\leq\|x-y\|;
\end{equation}

\item $\rho$-\textit{firmly nonexpansive} ($\rho$-FNE) \cite[Definition 2.1]{Crombez2006}, where $\rho \geq 0$, if for all $x,y\in\mathcal H$,
\begin{equation}\label{eq:def:FNEwithRho}
\|Ux-Uy\|^2\leq\|x-y\|^2-\rho\|(Ux-x)-(Uy-y)\|^2;
\end{equation}

\item \textit{firmly nonexpansive} (FNE) if for all $x,y\in\mathcal H$,
\begin{equation}\label{eq:def:FNE}
\langle Ux-Uy, x-y\rangle \geq\|Ux-Uy\|^2.
\end{equation}

\end{itemize}
\end{definition}

For a historical overview of the above-mentioned operators we refer the reader to \cite{Cegielski2012}. We have the following theorems.

\begin{theorem}
\label{th:cuttersAndQNE}
Let $U\colon\mathcal H\rightarrow\mathcal H$ be an operator with a fixed point and let $\alpha\in(0,2]$. Then $U$ is a cutter if and only if its relaxation $\id +\alpha(U-\id )$ is $(2-\alpha)/\alpha$-strongly quasi-nonexpansive.
\end{theorem}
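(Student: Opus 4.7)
The approach is a direct algebraic calculation using the identity $U_\alpha x - x = \alpha(Ux - x)$ together with the expansion of the squared norm, and checking that every step is reversible so both implications drop out at once.

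First I fix an arbitrary $x\in\mathcal H$ and $z\in\fix U = \fix U_\alpha$, and expand
\begin{equation*}
\|U_\alpha x - z\|^2 = \|(x-z)+\alpha(Ux-x)\|^2 = \|x-z\|^2 + 2\alpha\langle x-z, Ux-x\rangle + \alpha^2\|Ux-x\|^2.
\end{equation*}
Next I substitute $\|U_\alpha x - x\|^2 = \alpha^2\|Ux-x\|^2$ into the strong quasi-nonexpansivity inequality (\ref{eq:sqne}) with $\rho=(2-\alpha)/\alpha$, so that the desired bound becomes
\begin{equation*}
2\alpha\langle x-z, Ux-x\rangle + \alpha^2\|Ux-x\|^2 \leq -\tfrac{2-\alpha}{\alpha}\,\alpha^2\|Ux-x\|^2.
\end{equation*}
Using $1+(2-\alpha)/\alpha = 2/\alpha$ and dividing by $2\alpha>0$, this collapses to $\langle x-z,Ux-x\rangle + \|Ux-x\|^2\leq 0$, which after regrouping as $\langle (x-z)+(Ux-x), Ux-x\rangle\leq 0$ is precisely $\langle Ux - z, Ux - x\rangle\leq 0$, i.e.\ the cutter inequality (\ref{eq:cutter}) (up to the sign-invariant identity $\langle Ux-z, Ux-x\rangle = \langle z-Ux, x-Ux\rangle$).

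Since each manipulation is an equivalence (the hypothesis $\alpha\in(0,2]$ guarantees $2\alpha>0$ so that the division step is reversible, and the case $\alpha=2$ causes no difficulty since $\rho=0$ is allowed), the ``cutter $\Leftrightarrow$ $(2-\alpha)/\alpha$-SQNE relaxation'' equivalence follows at once without needing two separate arguments. There is essentially no obstacle here; the only thing to watch is the bookkeeping of signs when rewriting $\langle Ux-z, Ux-x\rangle$ in the form appearing in Definition \ref{def:QNE}, and the observation $\fix U_\alpha=\fix U$ (already noted in the preliminaries) which legitimizes evaluating the SQNE condition at the same $z\in\fix U$.
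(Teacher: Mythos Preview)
Your proof is correct. The paper itself does not give a proof of this statement but simply cites \cite[Proposition 2.3(ii)]{Combettes2001} and \cite[Theorem 2.1.39]{Cegielski2012}, so your direct algebraic verification is actually more self-contained than what appears there; the computation you carry out is exactly the standard one underlying those references.
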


\begin{proof}
See, for example, either \cite[Proposition 2.3(ii)]{Combettes2001} or \cite[Theorem 2.1.39]{Cegielski2012}.
\end{proof}

\begin{corollary}
\label{th:cuttersAndQNE:corollary} Let $U\colon\mathcal H\rightarrow\mathcal H$
be an operator with a fixed point and let $\rho\geq 0$. Then $U$ is $\rho$-SQNE if and only if its relaxation $\id +\frac{1+\rho}{2}(U-\id )$ is a cutter.
\end{corollary}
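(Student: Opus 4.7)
The plan is to reduce Corollary~\ref{th:cuttersAndQNE:corollary} to Theorem~\ref{th:cuttersAndQNE} by a change of relaxation parameter, exploiting the fact that ``$V$ is the $\beta$-relaxation of $U$'' is equivalent to ``$U$ is the $(1/\beta)$-relaxation of $V$.''

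First I would set $\beta := (1+\rho)/2$ and $V := \id + \beta(U-\id)$, and note the elementary identity
\begin{equation*}
U \;=\; \id + \tfrac{1}{\beta}(V-\id),
\end{equation*}
so that $U$ is the $(1/\beta)$-relaxation of $V$. Since $\beta \neq 0$ and $\fix U \neq \emptyset$, I would invoke the general fact recalled in Section~\ref{sec:preliminaries} that $\fix U = \fix V$, so $V$ is an operator with a (nonempty) fixed point set, which is exactly the hypothesis required for Theorem~\ref{th:cuttersAndQNE} to apply to $V$.

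Next, checking feasibility of the parameters, I would observe that $\rho \geq 0$ gives $\beta \geq 1/2$, hence $\alpha := 1/\beta \in (0,2]$, so Theorem~\ref{th:cuttersAndQNE} (applied to $V$ with this $\alpha$) yields the equivalence: $V$ is a cutter if and only if $\id + \alpha(V-\id)$ is $(2-\alpha)/\alpha$-SQNE. A direct computation shows $(2-\alpha)/\alpha = 2\beta - 1 = \rho$, and $\id + \alpha(V-\id) = U$ by the identity above. Therefore $V$ is a cutter if and only if $U$ is $\rho$-SQNE, which is precisely the statement of the corollary.

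There is no real obstacle here; the only things to verify are (i) that the relaxation ``inversion'' is well-defined, which is why the hypothesis $\rho \geq 0$ is needed (to keep $1/\beta \in (0,2]$), and (ii) that $V$ inherits a fixed point from $U$ so Theorem~\ref{th:cuttersAndQNE} genuinely applies. Both are handled in one line each, so the proof is essentially a two-line reduction to the preceding theorem.
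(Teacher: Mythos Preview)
Your reduction is correct: setting $V:=\id+\frac{1+\rho}{2}(U-\id)$ and applying Theorem~\ref{th:cuttersAndQNE} to $V$ with $\alpha=2/(1+\rho)\in(0,2]$ yields exactly the desired equivalence, and you have verified the two small prerequisites (that $\alpha$ lies in the admissible range and that $\fix V=\fix U\neq\emptyset$). The paper itself does not give an argument here but simply refers to \cite[Corollary~2.1.43]{Cegielski2012}; your derivation is the natural self-contained proof one would expect behind that citation.
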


\begin{proof}
See, for example, \cite[Corollary 2.1.43]{Cegielski2012}.
\end{proof}

\begin{theorem}
\label{th:FNEandNE} Let $U\colon\mathcal H\rightarrow\mathcal H$
be an operator and let $\alpha\in(0,2]$. Then $U$ is firmly nonexpansive (in the sense of (\ref{eq:def:FNE})) if and only if its relaxation $\id + \alpha(U-\id )$ is $(2-\alpha)/\alpha$-firmly nonexpansive.
\end{theorem}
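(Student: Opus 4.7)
The plan is to prove both directions simultaneously via a single algebraic identity, in complete analogy with the cutter/SQNE relation stated in Theorem \ref{th:cuttersAndQNE}. Set $V:=\id+\alpha(U-\id)$ and, for fixed but arbitrary $x,y\in\mathcal H$, observe the two elementary identities
\begin{equation*}
Vx-Vy=(1-\alpha)(x-y)+\alpha(Ux-Uy),\qquad (Vx-x)-(Vy-y)=\alpha\bigl((Ux-x)-(Uy-y)\bigr).
\end{equation*}
The first follows by regrouping the definition of $V$, and the second follows because $Vx-x=\alpha(Ux-x)$ pointwise.

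Next, I would compute the quantity
\begin{equation*}
\Delta(x,y):=\|x-y\|^2-\|Vx-Vy\|^2-\tfrac{2-\alpha}{\alpha}\,\bigl\|(Vx-x)-(Vy-y)\bigr\|^2,
\end{equation*}
whose nonnegativity is exactly the $(2-\alpha)/\alpha$-FNE inequality for $V$. Expanding $\|Vx-Vy\|^2$ using the first identity, and using $\|(Ux-x)-(Uy-y)\|^2=\|Ux-Uy\|^2-2\langle Ux-Uy,x-y\rangle+\|x-y\|^2$ for the last term via the second identity, the $\|x-y\|^2$ coefficients cancel (they add up to $\alpha(\alpha-2)+\alpha(2-\alpha)=0$), the $\|Ux-Uy\|^2$ coefficients combine to $2\alpha$, and the inner-product coefficients combine to $-2\alpha$. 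I would present this book-keeping in one short display, ending with
\begin{equation*}
\Delta(x,y)=2\alpha\bigl[\,\langle Ux-Uy,x-y\rangle-\|Ux-Uy\|^2\,\bigr].
\end{equation*}

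Since $\alpha>0$, the condition $\Delta(x,y)\geq 0$ for all $x,y\in\mathcal H$ is equivalent to $\langle Ux-Uy,x-y\rangle\geq\|Ux-Uy\|^2$ for all $x,y\in\mathcal H$, which is exactly (\ref{eq:def:FNE}); this yields both directions of the equivalence at once. The only genuine obstacle is the bookkeeping of the expansion, and nothing depends on $\alpha\leq 2$ except that the coefficient $(2-\alpha)/\alpha$ is nonnegative (so that the $\rho$-FNE notion makes sense); the identity itself is valid for every $\alpha\in(0,\infty)$. I would conclude with a one-line remark that by choosing $\alpha=1$ one recovers the familiar equivalence between FNE and $1$-FNE in the sense of (\ref{eq:def:FNEwithRho}), paralleling the $\alpha=1$ specialisation of Theorem \ref{th:cuttersAndQNE}.
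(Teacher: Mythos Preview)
Your argument is correct: the identity $\Delta(x,y)=2\alpha\bigl[\langle Ux-Uy,x-y\rangle-\|Ux-Uy\|^2\bigr]$ does hold, and since $\alpha>0$ it yields both implications at once. One small slip in your bookkeeping narrative: the $\|Ux-Uy\|^2$ coefficients actually combine to $-2\alpha$ and the inner-product coefficients to $+2\alpha$, not the other way around; your final displayed formula is correct, so this is only a typo in the surrounding prose that you should fix before presenting the computation.

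Regarding comparison with the paper: the paper gives no argument of its own. Its entire proof consists of citations to Cegielski's monograph, treating the cases $\alpha\in(0,2)$ and $\alpha=2$ separately via two different references. Your approach is therefore genuinely different in that it is self-contained and, moreover, handles the full range $\alpha\in(0,2]$ uniformly through a single algebraic identity (indeed, as you note, the identity is valid for all $\alpha>0$). What the paper's approach buys is brevity by deferring to a standard text; what yours buys is transparency and a visible parallel with Theorem~\ref{th:cuttersAndQNE}, which is pedagogically useful here.
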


\begin{proof}
For $\alpha \in (0,2)$, see, for example, \cite[Corollary 2.2.15]{Cegielski2012} and for  $\alpha=2$, see \cite[Theorem 2.2.10]{Cegielski2012}.
\end{proof}

Let $U\colon\mathcal H\rightarrow\mathcal H$ be an operator with $\fix U\neq\emptyset$.
One can easily see that if $U$ is NE, then it is QNE. Similarly, $U$ is $\rho$-SQNE whenever it is $\rho$-FNE. In addition, by Theorem \ref{th:cuttersAndQNE}, a cutter $U$ is $1$-SQNE and by Theorem \ref{th:FNEandNE}, an FNE operator $U$ is $1$-FNE. Hence an FNE $U$ is a cutter. Furthermore, $U$ is QNE if and only if $(\id +U)/2$ is a cutter. In the same manner $U$ is NE if and only if $(\id +U)/2$ is FNE.

The set of fixed points of a cutter $U$ is closed and convex. Moreover,
$ \fix  U =\bigcap_{x\in\mathcal H}H(x,U x ),$ where $H(x,U x ):=\{z\in\mathcal H\mid\langle z-U x ,x-U x \rangle\leq 0\}$; see \cite[Proposition
2.6(ii)]{BauschkeCombettes2001}. Therefore, by the relation $\fix U=\fix U_\alpha$, Theorems \ref{th:cuttersAndQNE} and \ref{th:FNEandNE}, the set $\fix U$ is closed and convex whenever $U$ is either QNE, SQNE, NE or FNE.

\begin{example}\label{ex:MetProj}
The metric projection onto a nonempty, closed and convex set $C$ is FNE \cite[Theorem 2.2.21]{Cegielski2012}. Since $\fix P_C=C\neq\emptyset$, it is also a cutter. Therefore, for any $\alpha\in(0,2]$, the relaxation
$\id +\alpha(P_C-\id )$ is $\rho$-FNE and $\rho$-SQNE, where $\rho:=(2-\alpha)/\alpha$. Consequently, this relaxation is also NE and QNE.
\end{example}

\begin{example} \label{ex:def:SubPro}
Let $f:\mathcal H\rightarrow \mathbb{R}$ be a convex continuous function with a nonempty sublevel set $S(f, 0):=\{x\mid f(x)\leq 0\}$. Denote by $\partial f(x)$ its subdifferential, that is, $\partial f(x):=\{g\in\mathcal H
\mid f(y)-f(x)\geq\langle g,y-x\rangle\text{ for all } y\in\mathcal H\}$. By the continuity of $f$, the set $\partial f(x)\neq\emptyset$ for all $x\in \mathcal H$ (see \cite[Proposition 16.3 and Proposition 16.14]{BauschkeCombettes2011}). For each $x\in \mathcal H$, let $g_f(x)\in\partial f(x)$ be a given subgradient. The so-called \textit{subgradient projection} relative to $f$ is the operator $P_f:\mathcal H\rightarrow\mathcal H$ defined by
\begin{equation} \label{eq:ex:def:SubPro}
P_f x:=
\begin{cases}
x-\frac{{f(x)}}{\| g_f(x)\|^2}g_f(x) & \text{ if $g_f(x)\neq 0$,}\\
x & \text{ otherwise.}%
\end{cases}
\end{equation}
It is not difficult to see that $\fix P_f=S(f,0)$ (see \cite[Lemma 4.2.5]{Cegielski2012}) and that $P_f$ is a cutter (see \cite[Corollary 4.2.6]{Cegielski2012}). Moreover, one may replace the condition \textquotedblleft$g_f(x)\neq 0$" in the definition of $P_f$ by the condition \textquotedblleft$f(x)>0$", which leads to an equivalent definition of the subgradient projection. Similarly, as in the previous example, the relaxation
$\id +\alpha(P_f-\id )$ is $\rho$-SQNE, where $\rho:=(2-\alpha)/\alpha$.
\end{example}

The next theorem provides a relations between given SQNE operators
$U_{1},\ldots,U_{m}$ and their convex combinations or compositions.

\begin{theorem}
\label{th:SQNE} Let $U_{i}:\mathcal H\rightarrow\mathcal H$ be
$\rho_{i}$-strongly quasi-nonexpansive, $i\in I:=\{1,\ldots,m\}$, with $\bigcap_{i\in I}\fix  U_{i} \neq\emptyset$ and let $\rho:=\min_{i\in I}\rho_i>0$. Then:
\begin{enumerate}[(i)]
\item the convex combination $U:=\sum_{i\in I}\omega_{i}U_{i}$, where
$\omega_{i}>0,\ i\in I$, and $\sum_{i\in I}\omega_{i}=1$, is $\rho$-strongly quasi-nonexpansive;
\item the composition $U:=U_{m}\ldots U_{1}$ is $\frac{\rho}{m}$-strongly quasi-nonexpansive.
\end{enumerate}
Moreover, in both cases,
\begin{equation}
\fix  U =\bigcap_{i\in I}\fix U_{i}.
\end{equation}
\end{theorem}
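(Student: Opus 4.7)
The plan is to verify the SQNE inequality from Definition \ref{def:QNE} in both cases, then deduce the fixed point identity as a byproduct of the positive modulus $\rho>0$ (respectively $\rho/m>0$).

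For part (i), the natural starting point is the convexity of $\|\cdot\|^2$. Given $x\in\mathcal H$ and $z\in\bigcap_{i\in I}\fix U_i$, Jensen's inequality gives $\|Ux-z\|^2\leq\sum_{i\in I}\omega_i\|U_i x-z\|^2$. Applying the $\rho_i$-SQNE property of each $U_i$ and then bounding $\rho_i\geq\rho$ yields
\begin{equation*}
\|Ux-z\|^2\leq\|x-z\|^2-\rho\sum_{i\in I}\omega_i\|U_i x-x\|^2.
\end{equation*}
A second application of Jensen's inequality, this time to the vectors $U_i x-x$, gives $\|Ux-x\|^2\leq\sum_{i\in I}\omega_i\|U_i x-x\|^2$, and the two estimates combine to give the $\rho$-SQNE inequality for $U$.

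For part (ii), I would set $x^0:=x$, $x^i:=U_i x^{i-1}$ for $i=1,\dots,m$ so that $Ux=x^m$. Telescoping the SQNE inequalities $\|x^i-z\|^2\leq\|x^{i-1}-z\|^2-\rho_i\|x^i-x^{i-1}\|^2$ over $i=1,\dots,m$ and using $\rho_i\geq\rho$ produces
\begin{equation*}
\|Ux-z\|^2\leq\|x-z\|^2-\rho\sum_{i=1}^m\|x^i-x^{i-1}\|^2.
\end{equation*}
The slight technical point (the one place where one has to be a little careful) is to convert the sum of squared consecutive displacements into the squared total displacement; this is handled by the Cauchy--Schwarz bound $\|x^m-x^0\|^2=\|\sum_{i=1}^m(x^i-x^{i-1})\|^2\leq m\sum_{i=1}^m\|x^i-x^{i-1}\|^2$, which introduces the factor $1/m$ and yields the desired $\rho/m$-SQNE inequality.

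For the fixed point identity, the inclusion $\bigcap_{i\in I}\fix U_i\subseteq\fix U$ is immediate in both cases. For the reverse inclusion, the key observation is that if $Ux=x$ then the deficit term in the SQNE estimate must vanish. In case (i) this forces $\sum_{i\in I}\omega_i\|U_i x-x\|^2=0$, and since all $\omega_i>0$ we conclude $U_i x=x$ for every $i$. In case (ii) it forces $\sum_{i=1}^m\|x^i-x^{i-1}\|^2=0$, so that $x^0=x^1=\dots=x^m$, which successively gives $U_1x=x$, $U_2x=x$, and so on through $U_m x=x$. The assumption $\rho>0$ is exactly what is needed to draw these conclusions.
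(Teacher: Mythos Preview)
Your argument is correct in both parts: the Jensen/Cauchy--Schwarz route you take is the standard one for these results. Note that the paper itself does not give a proof here---it simply cites \cite[Theorems 2.1.48 and 2.1.50]{Cegielski2012}---so you have supplied the details that the paper defers to the literature. One minor presentational point: your SQNE inequalities are first established only for $z\in\bigcap_{i\in I}\fix U_i$, and the full SQNE property (for all $z\in\fix U$) only follows after you prove $\fix U=\bigcap_{i\in I}\fix U_i$; your fixed-point argument indeed uses only the restricted inequality together with the hypothesis $\bigcap_{i\in I}\fix U_i\neq\emptyset$, so the logic is sound, but it would read more cleanly if you made this order of dependence explicit.
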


\begin{proof}
See, for example, \cite[Theorems 2.1.48 and 2.1.50]{Cegielski2012}.
\end{proof}

\subsection{Regular operators} \label{sec:RegOperators}
\begin{definition}\label{def:WRBR}
We say that a quasi-nonexpansive operator $U:\mathcal H\rightarrow \mathcal
H$ is
\begin{enumerate}[(i)]
  \item \textit{weakly regular} (WR) if $U-\id$ is demi-closed at 0, that is, if for any sequence $\{x^k\}_{k=0}^\infty \subseteq\mathcal H$ and $x\in\mathcal H$, we have
    \begin{equation}\label{eq:def:WR}
    \left.
    \begin{array}
    [c]{l}%
    x^k\rightharpoonup x\\
    U x^k -x^k\rightarrow 0
    \end{array}
    \right\} \Longrightarrow x\in\fix U.
    \end{equation}
  \item \textit{boundedly regular} (BR) if for any bounded sequence $\{x^{k}\}_{k=0}^\infty \subseteq \mathcal H$, we have
  \begin{equation} \label{eq:def:BR}
    \lim_{k\rightarrow\infty}\| Ux^k-x^k\| =0\quad\Longrightarrow\quad \lim_{k\rightarrow\infty}d(x^k,\fix U)=0.
    \end{equation}
\end{enumerate}

\end{definition}
Weakly regular operators go back to papers by Browder and Petryshyn \cite{BrowderPetryshyn1966} and by Opial \cite{Opial1967}. A prototypical version of condition \eqref{eq:def:BR} can be found in \cite[Theorem 1.2]{PetryshynWilliamson1973} by Petryshyn and Williamson. The term ``boundedly regular'' comes from \cite{BauschkeNollPhan2015} by Bauschke, Noll and Phan while the term ``weakly regular'' can be found in \cite{KolobovReichZalas2016} by Kolobov, Reich and Zalas. Boundedly regular operators have been studied under the name \textit{approximately shrinking} in \cite{CegielskiZalas2013a, CegielskiZalas2014, Zalas2014, Cegielski2015, CegielskiMusallam2016, ReichZalas2015}, whereas weakly regular operators can be found, for example, in \cite{Cegielski2015b}.

We have the following relation between weakly and boundedly regular operators:

\begin{proposition}\label{th:WRandBR}
Let $U:\mathcal{H}\rightarrow \mathcal{H}$ be quasi-nonexpansive. Then the following assertions hold:
\begin{enumerate}[(i)]
\item If $U$ is boundedly regular, then $U$ is weakly regular;
\item If $\dim \mathcal{H}<\infty $ and $U$ is weakly regular, then $U$ is boundedly regular.
\end{enumerate}
\end{proposition}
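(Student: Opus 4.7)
The plan is to treat the two implications separately. A key fact used throughout, and already noted in the excerpt, is that $\fix U$ is closed and convex whenever $U$ is quasi-nonexpansive with $\fix U\neq\emptyset$; consequently $\fix U$ is weakly sequentially closed by Mazur's theorem. I expect the implication (i) to require slightly more care, while (ii) is essentially a Bolzano--Weierstrass extraction.

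For part (i), I would assume $U$ is boundedly regular and take an arbitrary sequence $\{x^k\}_{k=0}^\infty\subseteq\mathcal H$ with $x^k\rightharpoonup x$ and $\|Ux^k-x^k\|\to 0$. Weak convergence implies norm boundedness of $\{x^k\}$, so bounded regularity yields $d(x^k,\fix U)\to 0$. Setting $z^k:=P_{\fix U}x^k$, which is well defined because $\fix U$ is nonempty, closed and convex, one obtains $\|x^k-z^k\|=d(x^k,\fix U)\to 0$, and hence $z^k\rightharpoonup x$. Since $\{z^k\}\subseteq\fix U$ and $\fix U$ is weakly closed, this forces $x\in\fix U$, which is precisely weak regularity.

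For part (ii), I would argue by contradiction. Suppose $\dim\mathcal H<\infty$ and $U$ is weakly regular, but that some bounded sequence $\{x^k\}$ satisfies $\|Ux^k-x^k\|\to 0$ while $d(x^k,\fix U)\not\to 0$. Passing to a subsequence, one may assume $d(x^{k_j},\fix U)\geq\varepsilon$ for some $\varepsilon>0$ and every $j$. By the Bolzano--Weierstrass theorem, applicable since $\mathcal H$ is finite-dimensional, the bounded sequence $\{x^{k_j}\}$ has a norm-convergent sub-subsequence $x^{k_{j_\ell}}\to x$; in finite dimensions this is also weak convergence. Retaining $\|Ux^{k_{j_\ell}}-x^{k_{j_\ell}}\|\to 0$, weak regularity gives $x\in\fix U$, whence $d(x^{k_{j_\ell}},\fix U)\leq\|x^{k_{j_\ell}}-x\|\to 0$, contradicting the choice of subsequence.

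The only conceptually delicate point is in (i): one must transfer weak convergence of the original sequence to membership of its weak limit in $\fix U$, and this is not immediate from the definitions because weak convergence is much weaker than norm convergence. The trick is to introduce the auxiliary projected sequence $\{z^k\}$, whose distances to $\{x^k\}$ vanish, and then invoke weak closedness of convex closed sets; this converts the bounded-regularity hypothesis into the desired demi-closedness. Part (ii) offers no serious obstacle once one recalls that weak and norm convergence coincide when $\dim\mathcal H<\infty$.
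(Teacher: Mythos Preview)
Your argument is correct in both parts. In (i) you correctly use that weakly convergent sequences are bounded, invoke bounded regularity to get $d(x^k,\fix U)\to 0$, and then pass the weak limit to $\fix U$ via the projected sequence $z^k=P_{\fix U}x^k$ and the weak closedness of the closed convex set $\fix U$; in (ii) the Bolzano--Weierstrass extraction and the identification of weak and norm convergence in finite dimensions do exactly what is needed.

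As for comparison with the paper: the paper does not actually prove this proposition but simply cites \cite[Proposition 4.1]{CegielskiZalas2014}. Your write-up is therefore a self-contained proof where the paper offers none, and the argument you give is the standard one (and essentially what one finds in the cited reference).
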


\begin{proof}
See \cite[Proposition 4.1]{CegielskiZalas2014}.
\end{proof}

It is worth mentioning that in a general Hilbert space the weak regularity of $U$ is only a necessary condition for implication \eqref{eq:def:BR} and even a firmly nonexpansive mapping may not have this property; see either \cite[Example 2.9]{Zalas2014} or \cite[Examples 2.14 and 2.15]{GibaliReichZalas2015}. It is also worth mentioning that Cegielski \cite[Definition 4.4]{Cegielski2015} has recently considered a demi-closednss condition referring to a family of operators instead of a single one.

\begin{example}\label{ex:PCisAC}
Let $C\subseteq\mathcal H$ be closed and convex. Then the metric projection $P_C$ satisfies the relation $d(x,C)=\|P_Cx-x\|$ for every $x\in\mathcal H$. Moreover, $\fix P_C=C$. Therefore $P_C$ is boundedly regular and, by Proposition \ref{th:WRandBR}, it is also weakly regular.
\end{example}

\begin{example} \label{ex:NEandWR}
Let $U\colon\mathcal H\rightarrow\mathcal H$ be nonexpansive and assume that $\fix U\neq \emptyset$. Then $U$ is weakly regular; see \cite[Lemma 2]{Opial1967}. Moreover, if $\mathcal H=\mathbb R^n$, then $U$ is boundedly regular.
\end{example}

\begin{example} \label{ex:SubProjandDCPrinciple}
Let $f\colon\mathcal H\rightarrow\mathcal H$ and $P_f$ be as in Example \ref{ex:def:SubPro}. If $f$ is Lipschitz continuous on bounded sets, then $P_f$ is weakly regular; see, for instance, \cite[Theorem 4.2.7]{Cegielski2012}. Note that by \cite[Proposition 7.8]{BauschkeBorwein1996}, we can equivalently assume that $f$ maps bounded sets onto bounded sets or that the subdifferential of $f$ is nonempty and uniformly bounded on bounded sets. Moreover, if $\mathcal H=\mathbb R^n$, then $P_f$ satisfies all the above-mentioned conditions and consequently, by Proposition \ref{th:WRandBR}, $P_f$ is boundedly regular.
\end{example}

\subsection{Regularity of sets}\label{sec:regSets}
Let $C_{i}\subseteq \mathcal{H}$, $i\in I$, be closed and convex sets with a nonempty intersection $C$. Following Bauschke \cite[Definition 2.1]{Bauschke1995}, we propose the following definition.
\begin{definition}\label{def:BR}
 We say that the family $\mathcal C:=\{C_{i}\mid i\in I\}$ is \textit{boundedly regular} if for any bounded sequence $\{x^k\}_{k=0}^{\infty }\subseteq \mathcal H$, the following implication holds:
\begin{equation}
\lim_{k\rightarrow\infty}\max_{i\in I}d(x^k,C_i)=0\quad\Longrightarrow\quad \lim_{k\rightarrow\infty}d(x^k,C)=0.
\end{equation}
\end{definition}

\begin{theorem}
\label{th:bdreg}If at least one of the following conditions is satisfied:
\begin{enumerate}[(i)]
\item $\dim \mathcal{H}<\infty $,
\item $\interior \bigcap_{i\in I}C_{i}\neq \emptyset$,
\item each $C_{i}$ is a half-space,
\end{enumerate}
\noindent then the family $\mathcal{C}:=\{C_{i}\mid i\in I\}$ is boundedly
regular.
\end{theorem}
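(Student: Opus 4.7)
The plan is to establish bounded regularity of $\mathcal C$ under each of the three assumptions separately, since each case calls for a different standard technique from convex analysis. Throughout, let $\{x^k\}_{k=0}^\infty\subseteq\mathcal H$ be a bounded sequence with $\varepsilon_k:=\max_{i\in I}d(x^k,C_i)\to 0$, and for each $i$ let $y_i^k:=P_{C_i}(x^k)$, so that $\|x^k-y_i^k\|\leq\varepsilon_k$.

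For case (i), I would argue by contradiction using compactness. If $d(x^k,C)\not\to 0$, extract a subsequence with $d(x^{k_j},C)\geq\delta>0$. Since $\dim\mathcal H<\infty$ and $\{x^{k_j}\}$ is bounded, pass to a norm-convergent sub-subsequence $x^{k_j}\to x$. Continuity of each $d(\cdot,C_i)$ gives $d(x,C_i)=0$, so $x\in C_i$ for every $i$ (the $C_i$ are closed), hence $x\in C$, contradicting $d(x^{k_j},C)\geq\delta$. For case (iii), with $I$ finite as in the applications of the paper, I would invoke a Hoffman-type lemma: for finitely many closed half-spaces in Hilbert space with nonempty intersection $C$, there is a constant $\kappa>0$ such that $d(x,C)\leq\kappa\max_{i\in I}d(x,C_i)$ for every $x\in\mathcal H$. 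Bounded regularity then follows at once.

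Case (ii) is the geometric heart of the theorem. Fix $c\in\interior C$ and $r>0$ with $B(c,r)\subseteq C$, so that $B(c,r)\subseteq C_i$ for every $i$. For each $k$ set $t_k:=\varepsilon_k/(r+\varepsilon_k)$ and define $z^k:=(1-t_k)x^k+t_kc$. The key observation is that for every $i$, the point $w_i^k:=(1-t_k)y_i^k+t_kc$ lies in $C_i$ by convexity, and, using convexity of $C_i$ combined with $B(c,r)\subseteq C_i$, the closed ball $B(w_i^k,t_kr)$ is contained in $C_i$. A direct computation yields
\[
\|z^k-w_i^k\|=(1-t_k)\|x^k-y_i^k\|\leq(1-t_k)\varepsilon_k=t_kr,
\]
so $z^k\in C_i$, and since $i\in I$ was arbitrary, $z^k\in C$. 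Consequently, $d(x^k,C)\leq\|z^k-x^k\|=t_k\|c-x^k\|\to 0$, because $\{x^k\}$ is bounded and $t_k\to 0$.

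The main obstacle is the inflation construction in case (ii), where the choice of $t_k$ must be balanced against $\varepsilon_k$ and the interior radius $r$ so that $z^k$ simultaneously lies in every $C_i$ while collapsing onto $x^k$; the bound $t_k\|c-x^k\|\to 0$ crucially uses the \emph{boundedness} of $\{x^k\}$, which is why the conclusion is bounded rather than merely pointwise regularity. Cases (i) and (iii) are softer: the first is pure compactness, and the third reduces to Hoffman-type linear regularity, which is a much stronger conclusion but classically available in the polyhedral setting.
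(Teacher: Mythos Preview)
Your argument is correct. The paper, however, does not prove this result at all: its entire proof is the single line ``See \cite[Fact 2.2]{Bauschke1995}.'' The three cases you supply are precisely the standard proofs found in that literature (see also \cite{BauschkeBorwein1996}): a compactness/subsequence contradiction for (i); the convex-combination inflation technique for (ii), where your choice $t_k=\varepsilon_k/(r+\varepsilon_k)$ is exactly the classical one making $(1-t_k)\varepsilon_k=t_kr$, and your remark that boundedness of $\{x^k\}$ is needed to force $t_k\|x^k-c\|\to 0$ is the correct emphasis; and Hoffman-type linear regularity for (iii), which indeed requires $I$ finite. So you have not taken a different route from the paper---you have reconstructed the content of the reference the paper defers to.
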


\begin{proof}
See \cite[Fact 2.2]{Bauschke1995}.
\end{proof}
For more properties of boundedly regular families of sets we refer the reader to Bauschke's PhD thesis \cite{Bauschke1996PhD} and to the review paper \cite{BauschkeBorwein1996}.

\subsection{Hybrid steepest descent method}\label{sec:HSD}
In this section we present a general convergence theorem for the hybrid steepest descent method; compare with (\ref{eq:def:HSD}). In Section \ref{sec:OAM} we apply this theorem to a family of relaxed metric projections $R_k:=\id+\alpha_k(P_{H_k}-\id)$, which constitute our outer-approximation method. Before all this we recall the following technical lemma.

\begin{lemma} \label{th:EquivImp}
Let $\{s_k\}_{k=0}^\infty, \{d_k\}_{k=0}^{\infty
}\subseteq[0,\infty)$ be given. The following conditions are equivalent:

\begin{enumerate}
[(i)]

\item for any $\{n_k\}_{k=0}^\infty\subseteq\{k\}_{k=0}^\infty$, the
following implication holds:
\begin{equation}
\label{eq:th:EquivImp:Cond1}\lim_{k\rightarrow\infty} s_{n_k}=0\quad
\Longrightarrow\quad\lim_{k\rightarrow\infty} d_{n_k}=0;
\end{equation}

\item for any $\{n_k\}_{k=0}^\infty\subseteq\{k\}_{k=0}^\infty$ and for
any $\varepsilon>0$, there are $k_0\geq0$ and $\delta>0$ such that for any
$k\geq k_0$, the following implication holds:
\begin{equation}
\label{eq:th:EquivImp:Cond2}s_{n_k}<\delta\quad\Longrightarrow\quad
d_{n_k}<\varepsilon.
\end{equation}

\end{enumerate}
\end{lemma}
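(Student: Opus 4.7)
The plan is to prove the two implications separately. The content is essentially that condition (ii) is a finite, $\varepsilon$-$\delta$ reformulation of the convergence statement (i), and the whole argument reduces to extracting subsequences carefully.

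For the easier direction (ii)$\Rightarrow$(i), I would proceed directly. Fix an arbitrary subsequence $\{n_k\}$ with $s_{n_k}\to 0$ and take $\varepsilon>0$. By (ii) applied to this same subsequence and $\varepsilon$, there exist $k_0$ and $\delta>0$ such that $s_{n_k}<\delta$ forces $d_{n_k}<\varepsilon$ for $k\geq k_0$. Since $s_{n_k}\to 0$, there is $k_1$ with $s_{n_k}<\delta$ for all $k\geq k_1$, and combining these gives $d_{n_k}<\varepsilon$ for $k\geq \max\{k_0,k_1\}$, i.e., $d_{n_k}\to 0$.

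For the converse (i)$\Rightarrow$(ii), I would argue by contradiction via a diagonal extraction. Suppose (ii) fails. Then there is a subsequence $\{n_k\}\subseteq\{k\}$ and some $\varepsilon>0$ such that for every $k_0\geq 0$ and every $\delta>0$ one can find $k\geq k_0$ with $s_{n_k}<\delta$ and yet $d_{n_k}\geq\varepsilon$. Specialising to $\delta=1/j$ and to the threshold $k_0=j$ for $j=1,2,\ldots$, I can pick indices $k_1<k_2<\cdots$ (the strict monotonicity is forced by choosing $k_{j+1}>k_j$, which is permitted since the condition $k\geq k_0$ may be taken as large as we please). Setting $m_j:=n_{k_j}$ yields a further subsequence $\{m_j\}\subseteq\{k\}$ with $s_{m_j}<1/j\to 0$ but $d_{m_j}\geq\varepsilon$ for every $j$. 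Applying (i) to the subsequence $\{m_j\}$ would force $d_{m_j}\to 0$, contradicting $d_{m_j}\geq\varepsilon$.

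Neither direction presents a genuine obstacle; the only place that requires a little care is ensuring that the extracted indices $\{m_j\}$ form a strictly increasing subsequence of $\{k\}_{k=0}^\infty$, which is what allows us to legally invoke hypothesis (i). Everything else is a routine unpacking of the quantifiers.
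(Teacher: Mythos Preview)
Your proof is correct in both directions; the $\varepsilon$--$\delta$ unwinding for (ii)$\Rightarrow$(i) and the diagonal-extraction contradiction for (i)$\Rightarrow$(ii) are the standard arguments, and you have handled the one delicate point (strict monotonicity of the extracted indices so that $\{m_j\}$ is a legitimate subsequence to which (i) may be applied) with appropriate care.

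There is nothing to compare with in the paper itself: the authors do not prove this lemma but simply refer the reader to \cite[Lemma 3.15]{Zalas2014} or \cite[Lemma 2.15]{GibaliReichZalas2015}. Your self-contained argument is therefore a genuine addition rather than a duplication, and it is exactly the kind of elementary verification one would expect those references to contain.
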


\begin{proof}
See either \cite[Lemma 3.15]{Zalas2014} or \cite[Lemma 2.15]{GibaliReichZalas2015}.
\end{proof}

\begin{theorem} \label{th:conv:HSD}
Let $F\colon\mathcal H\rightarrow\mathcal H$ be $L$-Lipschitz continuous and $\alpha$-strongly monotone, and let $C$ be closed and convex. Moreover, for each $k=0,1,2,\ldots$, let $R_k\colon\mathcal H\rightarrow\mathcal H$ be $\rho_k$-SQNE such that $C\subseteq\fix R_k$ and let $\lambda_k \in[0,\infty)$. Consider the following hybrid steepest descent method:
\begin{equation}\label{eq:th:conv:HSD:def}
z^0\in\mathcal H;\qquad z^{k+1}:=R_kz^k-\lambda_kFR_kz^k.
\end{equation}

Then the sequence $\{z^k\}_{k=0}^\infty$ is bounded. Moreover, if $\rho:=\inf_k\rho_k>0$, $\lim_{k\rightarrow \infty}\lambda_k=0$ and there is an integer $s\geq 1$ such that the implication
\begin{equation} \label{eq:th:conv:HSD:RkRegularity}%
\lim_{k\rightarrow\infty}\sum_{l=0}^{s-1}\| R_{n_k-l} z^{n_k%
-l} -z^{n_k-l}\|=0\quad\Longrightarrow\quad\lim_{k\rightarrow
\infty}d(z^{n_k},C)=0
\end{equation}
holds true for each subsequence $\{n_k\}_{k=0}^\infty \subseteq\{k\}_{k=0}^\infty$, then
$\lim_{k\rightarrow\infty}d(z^k,C)=0$. If, in addition, $\sum_{k=0}^\infty\lambda_k=\infty$, then the sequence $\{z^k\}_{k=0}^\infty$ converges in norm to the unique solution of VI($F$, $C$).
\end{theorem}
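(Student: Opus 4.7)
\emph{Boundedness.} Let $x^*$ denote the unique solution of VI$(F,C)$, whose existence and uniqueness are recalled in the introduction. Since $x^* \in C \subseteq \fix R_k$ and $R_k$ is quasi-nonexpansive, $\|R_k z^k - x^*\| \leq \|z^k - x^*\|$. Moreover, for $\lambda \in (0, 2\alpha/L^2)$ the operator $\id - \lambda F$ is a strict contraction with constant $\tau(\lambda) := \sqrt{1 - \lambda(2\alpha - \lambda L^2)}$, and for all sufficiently small $\lambda$ one has $\tau(\lambda) \leq 1 - \lambda\alpha/2$. Writing $z^{k+1} = (\id - \lambda_k F) R_k z^k$ and comparing with $(\id - \lambda_k F) x^* = x^* - \lambda_k F x^*$, one obtains, once $\lambda_k$ is small enough,
\begin{equation*}
\|z^{k+1} - x^*\| \leq (1 - \lambda_k\alpha/2)\|z^k - x^*\| + \lambda_k \|Fx^*\| \leq \max\!\left(\|z^k - x^*\|,\, \tfrac{2\|Fx^*\|}{\alpha}\right),
\end{equation*}
from which boundedness follows by a standard induction, the initial finite window of indices being handled separately.

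\emph{Asymptotic regularity.} Put $a_k := \|z^k - x^*\|^2$ and $S_k := \|R_k z^k - z^k\|^2$. Squaring the contraction estimate above and using the $\rho$-SQNE bound $\|R_k z^k - x^*\|^2 \leq a_k - \rho S_k$, together with the boundedness of $\{z^k\}$ (which keeps $\|FR_k z^k\|$ bounded) to absorb lower-order terms, I would derive a quasi-Fej\'er inequality of the form
\begin{equation*}
a_{k+1} \leq a_k - \tfrac{\rho}{2} S_k + K\lambda_k,
\end{equation*}
valid for all sufficiently large $k$ and some constant $K>0$. Telescoping this over blocks of length $s$ and invoking Cauchy--Schwarz converts any uniform lower bound on $\sum_{l=0}^{s-1}\|R_{n_k-l}z^{n_k-l}-z^{n_k-l}\|$ into one on $a_{n_k-s+1}-a_{n_k+1}$. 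Arguing by contradiction, if $d(z^{n_k},C)\geq\varepsilon_0$ along some subsequence, then Lemma \ref{th:EquivImp} applied to the implication (\ref{eq:th:conv:HSD:RkRegularity}) supplies $\delta_0>0$ with $\sum_{l=0}^{s-1}\|R_{n_k-l}z^{n_k-l}-z^{n_k-l}\|\geq\delta_0$ for large $k$, and therefore $a_{n_k+1}\leq a_{n_k-s+1}-c$ for some $c>0$. Combined with the Maing\'e peak-point device $\tau(k):=\max\{j\leq k : a_j\leq a_{j+1}\}$ (which covers the case where $\{a_k\}$ is not eventually non-increasing), this yields an infinite descent along a sparse sub-subsequence, contradicting $a_k\geq 0$.

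\emph{Strong convergence.} Assuming further that $\sum_k\lambda_k=\infty$, I would sharpen the estimate by invoking the strong monotonicity of $F$ directly to obtain
\begin{equation*}
a_{k+1} \leq (1-2\lambda_k\alpha)\|R_k z^k - x^*\|^2 + 2\lambda_k\langle Fx^*, x^* - R_k z^k\rangle + \lambda_k^2\|FR_k z^k\|^2,
\end{equation*}
and then apply the classical Xu--Maing\'e convergence lemma for such recursions. The crucial ingredient is $\limsup_k \langle Fx^*, x^* - R_k z^k\rangle \leq 0$. Since $d(z^k,C)\to 0$ and $R_k$ is QNE with $x^*\in\fix R_k$, a short triangle-inequality argument yields $d(R_k z^k,C)\to 0$; any weak cluster point along a limsup-realising subsequence therefore lies in the closed convex set $C$, and the variational inequality characterisation of $x^*$ supplies the required sign.

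The main obstacle I anticipate is the asymptotic regularity step, where the regularity implication (\ref{eq:th:conv:HSD:RkRegularity}) involves a window of $s$ consecutive iterates while the peak-point technique naturally controls only a single index. Reconciling the two requires summing the quasi-Fej\'er inequality over blocks and carefully combining the monotonic and non-monotonic cases for $\{a_k\}$.
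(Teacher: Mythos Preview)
Your route is genuinely different from the paper's. The paper does essentially no work here: it cites \cite[Lemma 9]{CegielskiZalas2013a} for boundedness, converts the hypothesis \eqref{eq:th:conv:HSD:RkRegularity} into the $\varepsilon$--$\delta$ form via Lemma~\ref{th:EquivImp}, and then invokes \cite[Theorem 12]{CegielskiZalas2013a} as a black box for both $d(z^k,C)\to 0$ and strong convergence. You instead try to reconstruct that black box from first principles using a quasi-Fej\'er inequality, a Maing\'e peak argument, and the Xu recursion lemma. Your boundedness and strong-convergence paragraphs are standard and correct (the latter in particular: $d(R_kz^k,C)\to 0$ plus weak closedness of $C$ does pin down the weak cluster points, and the variational characterisation of $x^*$ then gives the required $\limsup$ sign).

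The gap is in the middle step, and it is exactly the one you flag. From $a_{n_k+1}\le a_{n_k-s+1}-c$ you cannot get an ``infinite descent along a sparse sub-subsequence'': between two such blocks $a_k$ may rise by $K\sum_j\lambda_j$, and since you are \emph{not} assuming $\sum_k\lambda_k<\infty$ (indeed the interesting case has $\sum_k\lambda_k=\infty$) this increment is uncontrolled. The Maing\'e device does not rescue this, because at a peak index $\tau(k)$ it delivers $\|R_{\tau(k)}z^{\tau(k)}-z^{\tau(k)}\|\to 0$ for a \emph{single} index, whereas \eqref{eq:th:conv:HSD:RkRegularity} demands smallness over a window of $s$ consecutive indices; nothing in your sketch propagates the information from $\tau(k)$ to $\tau(k)-1,\ldots,\tau(k)-s+1$. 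For $s=1$ your outline can be completed (work with $d_k:=d(z^k,C)^2$ rather than $a_k$, so that in the Maing\'e branch $d_{\tau(k)}\to 0$ together with $d_k\le d_{\tau(k)+1}\le d_{\tau(k)}+K\lambda_{\tau(k)}$ finishes it), but for $s>1$ the window-versus-point mismatch is real and is precisely what \cite[Theorem 12]{CegielskiZalas2013a} handles. Without either reproducing that argument or finding an alternative device that controls an $s$-window at once, the proof is incomplete.
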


The proof of this theorem can be found in \cite[Theorem 3.16]{Zalas2014}. We include it below for the convenience of the reader.

\begin{proof}
Boundedness of $\{z^k\}_{k=0}^\infty$ follows from \cite[Lemma 9]{CegielskiZalas2013a}.

To prove that $d(z^k,C)\rightarrow 0$ it suffices to show, by \cite[Theorem 12]{CegielskiZalas2013a}, that for any $\varepsilon>0$, there are $k_0\geq0$ and $\delta>0$ such that for any $k\geq k_0$, the following implication holds:
\begin{equation} \label{eq:th:conv:HSD:proof:1}
\sum_{l=0}^{s-1}\rho_{k-l}\|R_{k-l}z^{k-l}-z^{k-l}\|^2<\delta\quad\Longrightarrow\quad
d^2(z^k,C)<\varepsilon.
\end{equation}
To this end, we define for each $k=0,1,2,\ldots,$
\begin{equation} \label{eq:th:conv:HSD:proof:2}
s_k:=\sum_{l=0}^{s-1}\rho_{k-l}\|R_{k-l}z^{k-l}-z^{k-l}\|^2
\end{equation}
and
\begin{equation} \label{eq:th:conv:HSD:proof:3}
d_k:=d^2(z^k,C).
\end{equation}
It is easy to see that, by assumption, $\rho >0$ and by (\ref{eq:th:conv:HSD:RkRegularity}), for any subsequence $\{n_k\}_{k=0}^\infty\subseteq\{k\}_{k=0}^\infty$, the
following implication holds:
\begin{equation} \label{eq:th:conv:HSD:proof:4}
\lim_{k\rightarrow\infty} s_{n_k}=0\quad
\Longrightarrow\quad\lim_{k\rightarrow\infty} d_{n_k}=0.
\end{equation}
Clearly, Lemma \ref{th:EquivImp} ((i)$\Rightarrow$(ii)) with $n_k\leftarrow k$ shows that implication (\ref{eq:th:conv:HSD:proof:1}) holds. Therefore, by \cite[Theorem 12 (i)]{CegielskiZalas2013a}, we get $\lim_{k\rightarrow\infty}d(z^k,C)=0$.

To finish the proof, note that the assumption $\sum_{k=0}^\infty\lambda_k=\infty$, when combined with \cite[Theorem 12 (v)]{CegielskiZalas2013a}, yields the assertion.
\end{proof}

\section{Outer approximation method}\label{sec:OAM}

\begin{theorem} \label{th:main}
Let $F\colon\mathcal H\rightarrow\mathcal H$ be $L$-Lipschitz continuous and $\alpha$-strongly monotone, and let $C\subset\mathcal H$ be closed and convex. Moreover, for each $k=0,1,2,\ldots$, let $T_k\colon\mathcal H\rightarrow\mathcal H$ be a cutter such that $C\subseteq\fix T_k$ and let $\lambda_k \in[0,\infty)$. Consider the following outer approximation method:
\begin{equation}\label{eq:def:xk}
x^0\in\mathcal H;\qquad x^{k+1}:=R_k(x^k-\lambda_k F x^k), \text{ for } k=0,1,2,\ldots,
\end{equation}
where
\begin{equation}\label{eq:def:Rk}
R_k:=\id+\alpha_k(P_{H_k}-\id),
\end{equation}
\begin{equation}\label{eq:def:Hk}
H_k:=\{z\in\mathcal H\mid\langle z-T_k x^k
,x^k-T_k x^k \rangle\leq 0\}
\end{equation}
and where $\alpha_k\in[\varepsilon,2-\varepsilon]$ is the user-chosen relaxation parameter for some $\varepsilon >0$.

Then the sequence $\{x^k\}_{k=0}^\infty$ is bounded. Moreover, if $\lim_{k\rightarrow \infty}\lambda_k=0$ and there is an integer $s\geq 1$ such that the implication
\begin{equation} \label{eq:Cond:TkRegularity}%
\lim_{k\rightarrow\infty}\sum_{l=0}^{s-1}\| T_{n_k-l} x^{n_k%
-l} -x^{n_k-l}\|=0\quad\Longrightarrow\quad\lim_{k\rightarrow
\infty}d(x^{n_k},C)=0
\end{equation}
holds true for each subsequence $\{n_k\}_{k=0}^\infty \subseteq\{k\}_{k=0}^\infty$, then
$\lim_{k\rightarrow\infty}d(x^k,C)=0$. If, in addition, $\sum_{k=0}^\infty\lambda_k=\infty$, then the sequence $\{x^k\}_{k=0}^\infty$ converges in norm to the unique solution of VI($F$, $C$).
\end{theorem}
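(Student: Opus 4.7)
The plan is to recognize the outer approximation iteration (\ref{eq:def:xk}) as an instance of the hybrid steepest descent method (\ref{eq:th:conv:HSD:def}) via a one-step change of variables, and then invoke Theorem \ref{th:conv:HSD}. Setting $z^k:=x^k-\lambda_k F x^k$ gives $x^{k+1}=R_k z^k$ and hence
\begin{equation}
z^{k+1} = R_k z^k - \lambda_{k+1} F R_k z^k,
\end{equation}
which is exactly (\ref{eq:th:conv:HSD:def}) after relabeling the step-sizes as $\tilde\lambda_k:=\lambda_{k+1}$; the conditions $\lambda_k\to 0$ and $\sum_{k=0}^{\infty}\lambda_k=\infty$ are invariant under this shift.

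First I would verify the structural hypotheses of Theorem \ref{th:conv:HSD}. Since $P_{H_k}$ is firmly nonexpansive (Example \ref{ex:MetProj}), it is a cutter, and Theorem \ref{th:cuttersAndQNE} shows that $R_k$ is $\rho$-SQNE with $\rho:=\varepsilon/(2-\varepsilon)>0$, uniformly in $k$. The inclusion $C\subseteq\fix R_k=H_k$ follows because $T_k$ is a cutter with $C\subseteq\fix T_k$, so every $z\in C$ satisfies $\langle z-T_k x^k,\, x^k-T_k x^k\rangle\leq 0$. Theorem \ref{th:conv:HSD} then yields boundedness of $\{z^k\}$, and hence of $\{x^k\}$ via the quasi-nonexpansiveness of the $R_k$'s with common fixed points in $C$.

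The main obstacle is transferring the regularity condition (\ref{eq:Cond:TkRegularity}), expressed in terms of $T_k$ and $x^k$, to the HSD condition (\ref{eq:th:conv:HSD:RkRegularity}), expressed in terms of $R_k$ and $z^k$. The key geometric observation is that the boundary of the half-space $H_k$ passes through $T_k x^k$ with normal $x^k-T_k x^k$, whence $P_{H_k}x^k=T_k x^k$ whenever $x^k\notin H_k$, with the trivial identification when $x^k\in\fix T_k$. In both cases $\|P_{H_k}x^k-x^k\|=\|T_k x^k-x^k\|$. Firm nonexpansiveness of $P_{H_k}$ then yields
\begin{equation}
\bigl|\,\|R_k z^k - z^k\| - \alpha_k\|T_k x^k - x^k\|\,\bigr| \leq (2-\varepsilon)\,\lambda_k\,\|F x^k\|,
\end{equation}
whose right-hand side tends to $0$ since $\{x^k\}$ is bounded and $F$ is Lipschitz. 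Similarly $|d(z^k,C)-d(x^k,C)|\leq\|z^k-x^k\|\to 0$. These estimates render (\ref{eq:Cond:TkRegularity}) and (\ref{eq:th:conv:HSD:RkRegularity}) equivalent along any subsequence $\{n_k\}$.

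Applying Theorem \ref{th:conv:HSD} then yields $d(z^k,C)\to 0$, and under the additional hypothesis $\sum_{k=0}^{\infty}\lambda_k=\infty$, strong convergence of $\{z^k\}$ to the unique solution $x^*$ of VI($F$,$C$). The analogous conclusions for $\{x^k\}$ follow at once from $\|x^k-z^k\|=\lambda_k\|F x^k\|\to 0$.
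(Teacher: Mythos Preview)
Your proposal is correct and follows essentially the same route as the paper: the change of variables $z^k=x^k-\lambda_kFx^k$ recasts \eqref{eq:def:xk} as the HSD iteration \eqref{eq:zkxkDependance}, the uniform SQNE constant for $R_k$ and the inclusion $C\subseteq H_k=\fix R_k$ are verified as you indicate, and the regularity condition \eqref{eq:Cond:TkRegularity} is transferred to \eqref{eq:th:conv:HSD:RkRegularity} via $\|z^k-x^k\|\to 0$ before invoking Theorem~\ref{th:conv:HSD}. The paper isolates the transfer step in a separate technical lemma (Lemma~\ref{th:zkxk}), whereas your single inequality $\bigl|\|R_kz^k-z^k\|-\alpha_k\|T_kx^k-x^k\|\bigr|\le\alpha_k\|z^k-x^k\|$, obtained from the $1$-firm nonexpansiveness of $P_{H_k}$ together with $P_{H_k}x^k=T_kx^k$, is a slightly more direct way to reach the same equivalence.
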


The detailed proof of this theorem is given in Subsection \ref{sec:OAM:ConvAnal}. For now we discuss only basic properties of this method with a geometric interpretation and a simple motivation for the convergence conditions.

As we have already mentioned in the Introduction, Lipschitz continuity and strong monotonicity of $F$ guarantee the existence and uniqueness of a solution to VI($F$, $C$).

Observe that for every $k=0,1,2,\ldots,$ the set $H_k$ is a half-space unless $x^k=T_kx^k$ in which case it is all of $\mathcal H$. Therefore
(\ref{eq:def:xk}) has the following explicit form:
\begin{equation} \label{eq:computationRk}
x^{k+1}=R_kz^k=
\begin{cases}
z^k-\alpha_k\frac{\left\langle z^k-T_k x^k
,\ x^k-T_k x^k \right\rangle }{\| x^k-T_k
x^k \|^2}(x^k-T_k x^k ) & \text{if }%
z^k\notin H_k,\\
z^k & \text{if }z^k\in H_k,
\end{cases}
\end{equation}
where
\begin{equation} \label{eq:def:zk}
z^k:=x^k-\lambda_kFx^k.
\end{equation}
Moreover, since for every $k=0,1,2,\ldots$, the operator $T_k$ is a cutter such that $C\subseteq \fix  T_k $, then the subset $C$ is outerly approximated by $H_k$. Moreover, $C\subseteq \fix  T_k \subseteq H_k$. We illustrate the iterative method (\ref{eq:def:xk}) in this case in Figure \ref{fig:algorithm}.

\begin{figure}[htb]
\centering
\includegraphics[scale=0.25]{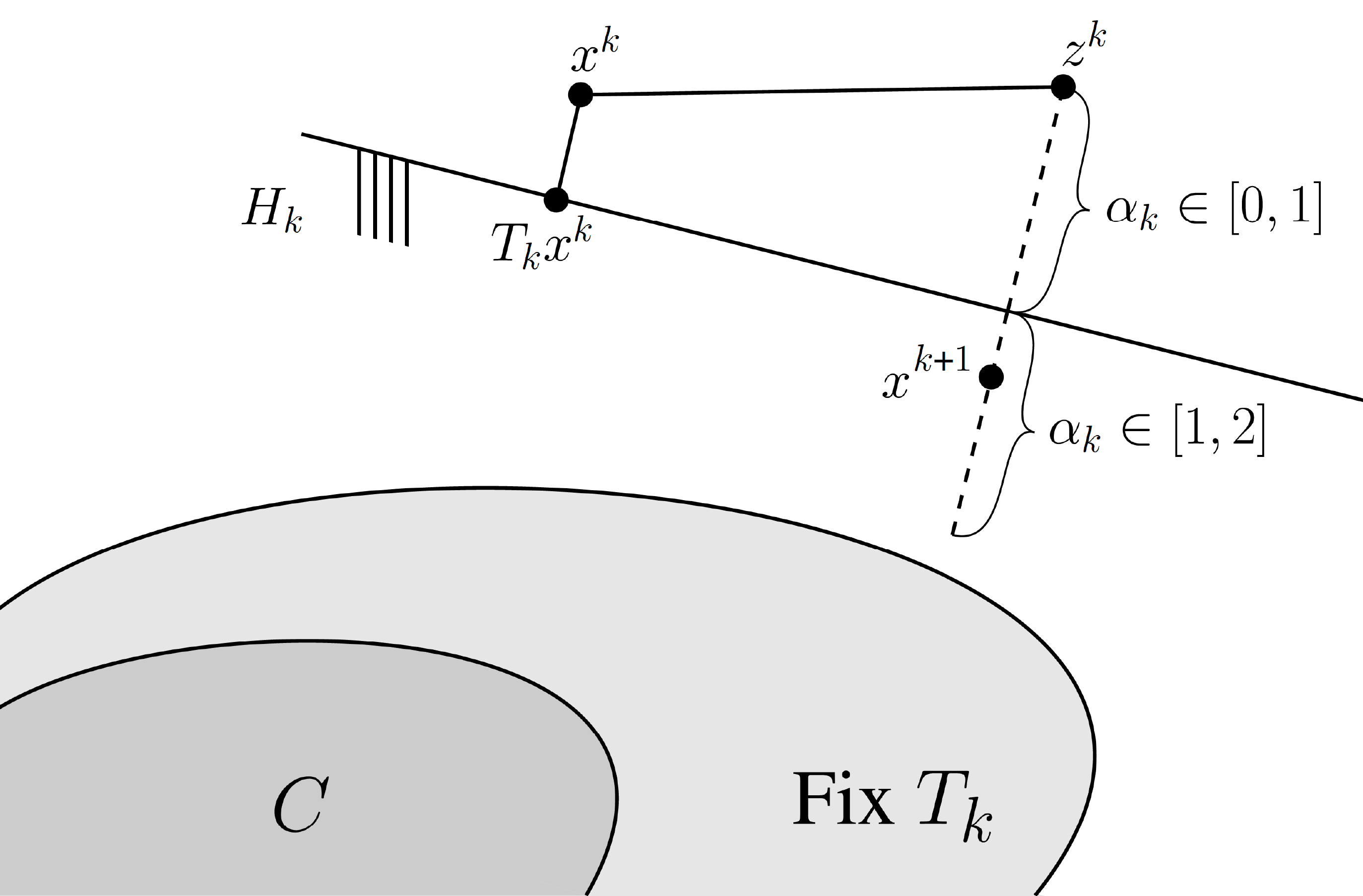}
\caption{Illustration of the iterative step of the outer approximation method (\ref{eq:def:xk}).}
\label{fig:algorithm}
\end{figure}

By imposing conditions on $\{\lambda_k\}_{k=0}^\infty$, following Fukushima \cite{Fukushima1986} and others, see \cite{CensorGibali2008}, \cite{CegielskiGibaliReichZalas2013} and \cite{GibaliReichZalas2015}, we replace the fixed step size $\lambda$ in the gradient projection method (\ref{eq:def:GP}) by a null, non-summable sequence. This condition is quite common in optimization theory and, in particular, appears in the context of the HSD method (\ref{eq:th:conv:HSD:def}); see, for example, the papers by Yamada and Ogura \cite{YamadaOgura2004}, Hirstoaga \cite{Hirstoaga2006}, Aoyama and Kohsaka \cite{AoyamaKohsaka2014}, Cegielski and Zalas \cite{CegielskiZalas2013a, CegielskiZalas2014} and Cegielski and Al-Musallam \cite{CegielskiMusallam2016}. In many cases, the choice of the sequence $\{\lambda_k\}_{k=0}^\infty$ is more restrictive than the one proposed in Theorem \ref{th:main}. Examples of such restrictions can be found in the papers by Halpern \cite{Halpern1967}, Lions \cite{Lions1977}, Wittmann \cite{Wittmann1992}, Bauschke \cite{Bauschke1996}, Deutsch and Yamada \cite{DeutschYamada1998}, Yamada \cite{Yamada2001}, Xu and Kim \cite{XuKim2003}, Zeng \textit{et al.} \cite{ZengWongYao2007}, Takahashi and Yamada \cite{TakahashiYamada2008}, Aoyama and Kimura \cite{AoyamaKimura2011}, and Zhang and He \cite{ZhangHe2013}.

The condition \eqref{eq:Cond:TkRegularity} is essential as we now explain in detail. Although this condition imposes some regularity on the sequence $\{x^k\}_{k=0}^\infty$, the convergence of which is under investigation, this does not reduce its generality. One could, for example, assume a variant of \eqref{eq:Cond:TkRegularity}, where instead of the trajectory of method \eqref{eq:def:xk}, any arbitrary sequence is used, as in \cite{TakahashiTakeuchiKubota2008, AoyamaKohsaka2014}. This, however, could be more difficult to verify, since an arbitrary sequence $\{x^k\}_{k=0}^\infty$ does not provide any additional information concerning its structure. By restricting our attention to trajectories generated by the outer approximation method \eqref{eq:def:xk}, we are able to utilize such a structure and therefore the verification of \eqref{eq:Cond:TkRegularity} should require at most as much effort as its verification for any arbitrary sequence.
Notice, however, that \eqref{eq:Cond:TkRegularity} refers not only to the sequence $\{x^k\}_{k=0}^\infty$, but first of all to the sequence of operators $\{T_k\}_{k=0}^\infty$, which determine method \eqref{eq:def:xk}. We give now several simple examples where this condition is satisfied. To make the introductory analysis simpler we assume that $s=1$, $T_k=T$ and $C=\fix T$ for some cutter $T\colon\mathcal H\rightarrow\mathcal H$ and for every $k=0,1,2,\ldots$. It is not difficult to see that if $T$ is boundedly regular and, in particular, if $T=P_C$, then \eqref{eq:Cond:TkRegularity} holds true. Now assume that $\mathcal H=\mathbb R^n$. If $T$ is weakly regular and, in particular, when $T$ is either nonexpansive or $T=P_f$ (see Examples \ref{ex:NEandWR} and \ref{ex:SubProjandDCPrinciple}), then again \eqref{eq:Cond:TkRegularity} is satisfied.

The parameter $s>1$ in \eqref{eq:Cond:TkRegularity} enables us to use $s$-almost cyclic and $s$-intermittent controls. Examples for this case are presented in Section \ref{sec:VIoverCFPP}.

Historically, condition \eqref{eq:Cond:TkRegularity} in this form appeared in Zalas' PhD thesis \cite[Theorem 3.16]{Zalas2014} in the context of the HSD method, and more recently, in \cite{GibaliReichZalas2015} in connection with the outer approximation method. Some of its weaker forms can be found in \cite[Definition 19]{CegielskiZalas2013a}, \cite[Definition 6.1]{CegielskiZalas2014} and \cite[Definition 4.1]{Cegielski2014}. Similar regularity conditions were proposed by many authors; see, for example, Bauschke \textit{et al.} \cite[Definition 3.7, Definition 4.8]{BauschkeBorwein1996}, Yamada \textit{et al.} \cite[Definition 1]{YamadaOgura2004}, Hirstoaga \cite[Condition 2.2(iii)]{Hirstoaga2006}, Takahashi \textit{et al.} \cite[NST condition (I) ]{TakahashiTakeuchiKubota2008}, Cegielski \cite[Theorem 3.6.2, Definition 5.8.5]{Cegielski2012}, and Aoyama \textit{et al.} \cite[Condition (Z)]{AoyamaKohsaka2014}.

\subsection{Convergence analysis} \label{sec:OAM:ConvAnal}
We begin this section with several simple observations. Let $\{x^k\}_{k=0}^\infty$ and $\{z^k\}_{k=0}^ \infty$ be two sequences generated by  (\ref{eq:def:xk}) and (\ref{eq:def:zk}), respectively. Then for any $k=0,1,2,\ldots$, the vector $z^{k+1}$ depends recursively on $z^k$ via the formula
\begin{equation} \label{eq:zkxkDependance}
z^0:=x^0-\lambda_0Fx^0,
\qquad
z^{k+1}=R_k z^k -\lambda_{k+1}FR_k z^k.
\end{equation}

According to Example \ref{ex:MetProj}, for each $k=0,1,2,\ldots$, the operator $R_k$ defined by (\ref{eq:def:Rk}) is $\rho_k$-SQNE with $\rho_k:=\frac{2-\alpha_k}{\alpha_k}$. Moreover, $\rho:=\frac{\varepsilon}{2-\varepsilon}$ satisfies the inequalities $0<\rho\leq \inf_k\rho_k$. Furthermore, for each $k=0,1,2,\ldots,$ we get $C\subseteq \fix R_k$. Consequently, one may apply Theorem \ref{th:conv:HSD} to the sequences $\{z^k\}_{k=0}^\infty$ and $\{R_k\}_{k=0}^\infty$, which is the key idea in our convergence analysis. We begin with the following lemma:

\begin{lemma} \label{th:zkxk}
The following statements hold true:
\begin{enumerate}[(i)]
  \item The sequences $\{x^k\}_{k=0}^\infty$ and $\{z^k\}_{k=0}^\infty$ are bounded;
  \item For any subsequence $\{n_k\}_{k=0}^\infty \subseteq\{k\}_{k=0}^\infty$, we have
  \begin{align} \label{eq:th:zkxk:equiv}
    \lim_{k\rightarrow\infty}(R_{n_k} z^{n_k} -z^{n_k})=0
    & \  \Longleftrightarrow \  \lim_{k\rightarrow\infty}(T_{n_k} x^{n_k} -x^{n_k})=0\\
    & \  \Longleftrightarrow \  \lim_{k\rightarrow\infty} (x^{n_k+1}-x^{n_k})=0 \\
    &\  \Longleftrightarrow \  \lim_{k\rightarrow\infty} (z^{n_k+1}-z^{n_k})=0;
  \end{align}
  \item If $\lim_{k\rightarrow\infty} d(x^{n_k},C)=0$, then all limits in (ii) are equal to zero;
  \item $\lim_{k\rightarrow\infty} d(x^{n_k},C)=0 $ if and only if $\lim_{k\rightarrow\infty} d(z^{n_k},C)=0$;
  \item $\lim_{k\rightarrow\infty} x^{n_k}=\lim_{k\rightarrow\infty} z^{n_k}$ if at least one of the limits exists.
\end{enumerate}
\end{lemma}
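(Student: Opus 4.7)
The overall strategy is to treat $\{z^k\}_{k=0}^\infty$ as the HSD iterate (\ref{eq:zkxkDependance}) generated by the operators $\{R_k\}_{k=0}^\infty$, exploiting the observations made at the start of this subsection: each $R_k$ is $\rho_k$-SQNE with $\rho_k=(2-\alpha_k)/\alpha_k$ uniformly bounded below by $\varepsilon/(2-\varepsilon)>0$ (Example \ref{ex:MetProj}), and $C\subseteq\fix R_k$ (since $C\subseteq\fix T_k\subseteq H_k$). All conclusions for $\{x^k\}_{k=0}^\infty$ will then be transferred from those for $\{z^k\}_{k=0}^\infty$ via the identity $z^k-x^k=-\lambda_k Fx^k$. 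In particular, for part (i), I would apply Theorem \ref{th:conv:HSD} to (\ref{eq:zkxkDependance}) to obtain boundedness of $\{z^k\}_{k=0}^\infty$; then $x^{k+1}=R_kz^k$ combined with the QNE inequality for any $c\in C$ yields boundedness of $\{x^k\}_{k=0}^\infty$. Lipschitz continuity of $F$ now makes $\{Fx^k\}_{k=0}^\infty$ bounded, so $\|z^k-x^k\|=\lambda_k\|Fx^k\|\to 0$ once $\lambda_k\to 0$.

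The crux of (ii) is the first equivalence, $\|R_{n_k}z^{n_k}-z^{n_k}\|\to 0 \Leftrightarrow \|T_{n_k}x^{n_k}-x^{n_k}\|\to 0$. Using the explicit formula (\ref{eq:computationRk}) for $R_kz^k$ and the identity
\begin{equation*}
\langle z^k-T_kx^k,\,x^k-T_kx^k\rangle = \|x^k-T_kx^k\|^2 - \lambda_k\langle Fx^k,\,x^k-T_kx^k\rangle,
\end{equation*}
a direct computation together with the Cauchy-Schwarz inequality gives
\begin{equation*}
\bigl|\|P_{H_k}z^k-z^k\| - \|x^k-T_kx^k\|\bigr|\leq\lambda_k\|Fx^k\|.
\end{equation*}
Since $\|R_kz^k-z^k\|=\alpha_k\|P_{H_k}z^k-z^k\|$ with $\alpha_k$ uniformly bounded in $[\varepsilon,2-\varepsilon]$, the equivalence follows from $\lambda_k\|Fx^k\|\to 0$. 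The remaining two equivalences in (ii) drop out of the identities $x^{k+1}-x^k=(R_kz^k-z^k)-\lambda_kFx^k$ and $z^{k+1}-z^k=(x^{k+1}-x^k)+\lambda_kFx^k-\lambda_{k+1}Fx^{k+1}$, via the triangle inequality and the same null-sequence argument.

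For (iii), the cutter property yields $\|T_kx^k-x^k\|\leq d(x^k,\fix T_k)\leq d(x^k,C)$, so $d(x^{n_k},C)\to 0$ implies $\|T_{n_k}x^{n_k}-x^{n_k}\|\to 0$, whence all the other limits in (ii) vanish. Both (iv) and (v) reduce to $\|z^k-x^k\|\to 0$: for (iv) one invokes the triangle inequality $|d(x^k,C)-d(z^k,C)|\leq\|x^k-z^k\|$, while (v) is immediate since $\{x^{n_k}\}$ and $\{z^{n_k}\}$ differ by a sequence tending to $0$.

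I expect the main obstacle to be the edge case in the first equivalence of (ii): when $z^k\in H_k$ the explicit formula (\ref{eq:computationRk}) forces $R_kz^k=z^k$, so the displayed estimate above must be verified by a separate argument. In that case, however, the defining inequality $\langle z^k-T_kx^k,\,x^k-T_kx^k\rangle\leq 0$ combined with the identity displayed above forces $\|x^k-T_kx^k\|\leq\lambda_k\|Fx^k\|$ on its own, so the estimate persists and the equivalence is safe.
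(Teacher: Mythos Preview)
Your argument is correct. Parts (i), (iv) and (v) coincide with the paper's proof essentially verbatim. For parts (ii) and (iii), however, you take a more direct route than the paper, and it is worth recording the difference.

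For (ii), the paper establishes the four-way equivalence by a \emph{cycle} of one-sided estimates: it bounds $\|R_{n_k}z^{n_k}-z^{n_k}\|$ by $\|T_{n_k}x^{n_k}-x^{n_k}\|$ (via Cauchy--Schwarz on (\ref{eq:computationRk})), then $\|T_{n_k}x^{n_k}-x^{n_k}\|$ by $\|x^{n_k+1}-x^{n_k}\|$ (via nonexpansivity of $R_{n_k}$ and $T_kx^k=P_{H_k}x^k$), then $\|x^{n_k+1}-x^{n_k}\|$ by $\|z^{n_k+1}-z^{n_k}\|$, and finally closes the loop back to $\|R_{n_k}z^{n_k}-z^{n_k}\|$, each step up to a $\lambda_k$-error. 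You instead prove the first equivalence \emph{directly} by exploiting the identity $\langle z^k-T_kx^k,\,x^k-T_kx^k\rangle=\|x^k-T_kx^k\|^2-\lambda_k\langle Fx^k,\,x^k-T_kx^k\rangle$ to obtain the two-sided bound $\bigl|\|P_{H_k}z^k-z^k\|-\|x^k-T_kx^k\|\bigr|\le\lambda_k\|Fx^k\|$, and then derive the remaining two equivalences from simple algebraic identities. Your estimate is sharper and avoids invoking nonexpansivity of $R_k$; the paper's cycle, on the other hand, never needs to split into the cases $z^k\in H_k$ versus $z^k\notin H_k$ that you handle separately at the end.

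For (iii), the paper shows $\|x^{n_k+1}-x^{n_k}\|\to 0$ by estimating $\|R_{n_k}x^{n_k}-x^{n_k}\|=\alpha_{n_k}d(x^{n_k},H_{n_k})\le 2\,d(x^{n_k},C)$. Your route via the cutter inequality $\|T_kx^k-x^k\|\le d(x^k,\fix T_k)\le d(x^k,C)$ is shorter and lands immediately on the quantity $\|T_{n_k}x^{n_k}-x^{n_k}\|$, from which the other limits follow by (ii).
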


\begin{proof}
First we show that (i) holds true. Note that Theorem \ref{th:conv:HSD} (i), when combined with (\ref{eq:zkxkDependance}), leads to the boundedness of the sequence $\{z^k\}_{k=0}^\infty$. To show that $\{x^k\}_{k=0}^\infty$ is also bounded, fix $z\in C$. By the definition of $x^{k+1}$ (see (\ref{eq:def:xk})), the quasi-nonexpansivity of $R_k$ and the inclusion $C\subseteq \fix R_k$, it is easy to see that
\begin{equation}
\|x^{k+1}-z\|=\|R_kz^k-z\|\leq \|z^k-z\|
\end{equation}
for each $k=0,1,2,\ldots$. Therefore $\{x^k\}_{k=0}^\infty$ is also bounded, as asserted.

Now we proceed to statement (ii). By (i), the sequence $\{x^k\}_{k=0}^\infty$ is bounded. Therefore, by the Lipschitz continuity of $F$, there is $M>0$ such that for each $k=0,1,2\ldots$, we have $\|Fx^k\|\leq M$. Let $\{n_k\}_{k=0}^\infty\subseteq\{k\}_{k=0}^\infty$. Assume that $\|T_{n_k}x^{n_k}-x^{n_k}\|\neq 0$. Then, by (\ref{eq:computationRk}), the Cauchy-Schwarz
inequality, (\ref{eq:def:zk}) and the triangle inequality,
\begin{align}\label{eq:th:zkxk:equiv:proof:1}
\| R_{n_k} z^{n_k} -z^{n_k}\| & =\alpha_{n_k}
\left\| \frac{\langle z^{n_k}-T_{n_k} x^{n_k}, x^{n_k}-T_{n_k} x^{n_k} \rangle} {\| x^{n_k}-T_{n_k} x^{n_k} \|^2}(x^{n_k}-T_{n_k}
x^{n_k} )\right\| \nonumber\\
& \leq 2\| z^{n_k}-T_{n_k} x^{n_k} \|= 2\left\|
x^{n_k}-\lambda_{n_k} F x^{n_k}
-T_{n_k} x^{n_k%
} \right\| \nonumber\\
& \leq 2\left(\| T_{n_k}x^{n_k}-x^{n_k}\|+\lambda_{n_k}M\right).
\end{align}
Moreover, if $\|T_{n_k}x^{n_k}-x^{n_k}\|= 0$, then $R_{n_k}=\id$ and inequality (\ref{eq:th:zkxk:equiv:proof:1}) holds trivially in this case.

Observe that for each $k=0,1,2,\ldots$, we have $T_k x^k =P_{H_k}
x^k $. Moreover, $R_k=\id +\alpha_k(P_{H_k}-\id )$ is NE, since $P_{H_k}$ is FNE  and $\alpha_k\in\lbrack\varepsilon,2-\varepsilon]$ (see Example \ref{ex:MetProj}). Hence, by the triangle inequality, (\ref{eq:def:xk}) and (\ref{eq:def:zk}), we have
\begin{align}\nonumber\label{eq:th:zkxk:equiv:proof:2}
\| T_{n_k} x^{n_k} -x^{n_k}\| & = \frac{1}{\alpha_k}
\left\| \alpha_k \left(P_{H_{n_k}} x^{n_k} -x^{n_k}\right)\right\|
\leq \frac 1\varepsilon\| R_{n_k} x^{n_k} -x^{n_k}\| \\\nonumber
& \leq \frac 1\varepsilon \left(\| R_{n_k} x^{n_k} -x^{n_k+1}\|
+\|x^{n_k+1}-x^{n_k}\|\right) \\ \nonumber
& =\frac 1\varepsilon \left(\|R_{n_k}x^{n_k}-R_{n_k}z^{n_k}\|+\|x^{n_k+1}-x^{n_k}\| \right)\\
& \leq \frac 1\varepsilon \left(\lambda_{n_k}M+ \| x^{n_k+1}-x^{n_k}\|\right).
\end{align}
Using (\ref{eq:def:zk}) for $x^{n_k+1}$ and $x^{n_k}$ together with the
triangle inequality, we obtain
\begin{equation}
\| x^{n_k+1}-x^{n_k}\|\leq\| z^{n_k+1}-z^{n_k}\|
+M(\lambda_{n_k+1}+\lambda_{n_k}).
\label{eq:th:zkxk:equiv:proof:3}%
\end{equation}
Moreover, by (\ref{eq:zkxkDependance}) applied to $z^{n_k+1}$ and the triangle inequality,
\begin{equation}
\| z^{n_k+1}-z^{n_k}\|\leq\| R_{n_k} z^{n_k}
-z^{n_k}\|+\lambda_{n_k+1}M. \label{eq:th:zkxk:equiv:proof:4}%
\end{equation}
The assumption that $\lambda_k\rightarrow0$, when combined with inequalities
(\ref{eq:th:zkxk:equiv:proof:1})--(\ref{eq:th:zkxk:equiv:proof:4}), yields the
equivalence \eqref{eq:th:zkxk:equiv}.

Now we show that (iii) holds true. To this end, assume that
\begin{equation}
\lim_{k\rightarrow\infty}d(x^{n_k},C)=0. \label{eq:th:zkxk:equiv:proof:5}%
\end{equation}
We claim that
\begin{equation}
\lim_{k\rightarrow\infty}\| x^{n_k+1}-x^{n_k}\|=0.
\end{equation}
Indeed, by the triangle inequality, (\ref{eq:def:xk}) and by the nonexpansivity of $R_{n_k}$, we obtain for all $k\geq 0$,
\begin{align}
\| x^{n_k+1}-x^{n_k}\| & \leq\| x^{n_k+1}-R_{n_k}
x^{n_k} \|+\| R_{n_k} x^{n_k} -x^{n_k%
}\|\nonumber\\
& =\| R_{n_k} z^{n_k} -R_{n_k} x^{n_k%
} \|+\alpha_{n_k} d(x^{n_k},H_{n_k})\nonumber\\
& \leq\| z^{n_k}-x^{n_k}\|+ 2\ d(x^{n_k},H_{n_k})\nonumber\\
& \leq \lambda_{n_k}M+2\ d(x^{n_k},H_{n_k}),
\end{align}
where the last equality follows from
\begin{equation}\label{eq:th:zkxk:equiv:proof:6}
\lambda_{n_k}M\geq\|z^{n_k}-x^{n_k}\|.
\end{equation}
Since for all $k\geq0$, $C\subseteq H_{n_k}$, we have
\begin{equation}
d(x^{n_k},H_{n_k})\leq d(x^{n_k},C).
\end{equation}
Thus
\begin{equation}
\| x^{n_k+1}-x^{n_k}\|\leq\lambda_{n_k}M+2\ d(x^{n_k},C)
\end{equation}
and the right-hand side of the above inequality converges to zero by
(\ref{eq:th:zkxk:equiv:proof:5}) and the assumption that $\lambda_k\rightarrow 0$.

Statements (iv) and (v) follow directly from (\ref{eq:th:zkxk:equiv:proof:6}) and again by the assumption that $\lambda_k\rightarrow 0$. This completes the proof.
\end{proof}

\begin{proof}[Proof of Theorem \ref{th:main}]
Let $\{z^k\}_{k=0}^\infty$ be the sequence defined in (\ref{eq:def:zk}), corresponding to $\{x^k\}_{k=0}^\infty$. Moreover, let $x^*$ be the unique solution of VI($F$, $C$). We show that $\{z^k\}_{k=0}^\infty$ converges in norm to $x^*$, which in view of Lemma \ref{th:zkxk} (v), yields the result. To this purpose, it suffices, by Theorem \ref{th:conv:HSD} and (\ref{eq:zkxkDependance}), to show that there is an integer $s\geq 1$ such that the implication
\begin{equation} \label{eq:th:main:proof:regularityRk}%
\lim_{k\rightarrow\infty}\sum_{l=0}^{s-1}\| R_{n_k-l} z^{n_k%
-l} -z^{n_k-l}\|=0\quad\Longrightarrow\quad\lim_{k\rightarrow
\infty}d(z^{n_k},C)=0
\end{equation}
holds true for each subsequence $\{n_k\}_{k=0}^\infty \subseteq\{k\}_{k=0}^\infty$. Let $\{n_k\}_{k=0}^\infty \subseteq\{k\}_{k=0}^\infty$ and assume that the antecedent of (\ref{eq:th:main:proof:regularityRk}) holds true, that is,
\begin{equation} \label{eq:th:main:proof:1}%
\lim_{k\rightarrow\infty}\sum_{l=0}^{s-1}\| R_{n_k-l} z^{n_k%
-l} -z^{n_k-l}\|=0.
\end{equation}
Thus, using Lemma \ref{th:zkxk} (ii), we arrive at
\begin{equation} \label{eq:th:main:proof:1}%
\lim_{k\rightarrow\infty}\sum_{l=0}^{s-1}\| T_{n_k-l} x^{n_k%
-l} -x^{n_k-l}\|=0.
\end{equation}
By \eqref{eq:Cond:TkRegularity}, we get
\begin{equation} \label{eq:th:main:proof:2}%
\lim_{k\rightarrow\infty}d( x^{n_k},C)=0,
\end{equation}
which, when combined with Lemma \ref{th:zkxk} (iv), yields
\begin{equation} \label{eq:th:main:proof:3}%
\lim_{k\rightarrow\infty}d( z^{n_k},C)=0.
\end{equation}
This completes the proof.
\end{proof}

\section{VIs over the common fixed point set}\label{sec:VIoverCFPP}
In this section we assume that $C:=\bigcap_{i\in I}C_i$, where each $C_i\subseteq\mathcal H$ is closed and convex and $I:=\{1,\ldots,m\}$.
\begin{theorem} \label{th:main2}
Let $F\colon\mathcal H\rightarrow\mathcal H$ be $L$-Lipschitz continuous and $\alpha$-strongly monotone, and assume that for each $i\in I$, we have
$ C_i=\fix U_i$ for some cutter operator $U_i\colon\mathcal H\rightarrow \mathcal H$. Let the sequence $\{x^k\}_{k=0}^\infty$ be defined by the outer approximation method \eqref{eq:def:xk}--\eqref{eq:def:Hk} and for every $k=0,1,2,\ldots,$ let $T_k\colon\mathcal H\rightarrow\mathcal H$ be defined either by a simultaneous
\begin{equation}\label{eq:def:convex}
T_k:=\sum_{i\in I_k}\omega_{i}^kU_{i}
\end{equation}
or a composition
\begin{equation}\label{eq:def:composition}
T_k:=\frac 1 2(\id+\prod_{i\in I_k}U_{i}),
\end{equation}
algorithmic operator, where $I_k\subseteq I$, $\sum_{i\in I_k}\omega_i^k=1$ and $0<\varepsilon\leq \omega_i^k\leq 1$.

Then the sequence $\{x^k\}_{k=0}^\infty$ is bounded. Moreover, if $\lim_{k\rightarrow \infty}\lambda_k=0$, $U_i$ is boundedly regular for every $i\in I$, $\{C_i \mid i\in I\}$ is boundedly regular and there is an integer $s\geq 1$ such that $I= I_{k-s+1}\cup\ldots\cup I_k$ for all $k\geq s$, then $\lim_{k\rightarrow\infty} d(x^k,C)=0$. If, in addition, $\sum_{k=0}^\infty\lambda_k=\infty$, then the sequence $\{x^k\}_{k=0}^\infty$ converges in norm to the unique solution of VI($F$, $C$).
\end{theorem}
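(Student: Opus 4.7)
The plan is to reduce the claim to Theorem \ref{th:main}, so I must verify two things: that each $T_k$ is a cutter with $C \subseteq \fix T_k$, and that the regularity condition \eqref{eq:Cond:TkRegularity} holds for the same integer $s \geq 1$ as in the intermittent control hypothesis.

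For the first requirement I would exploit the cutter--SQNE correspondence. Each $U_i$ is $1$-SQNE by Theorem \ref{th:cuttersAndQNE}. In the simultaneous case \eqref{eq:def:convex}, Theorem \ref{th:SQNE}(i) immediately gives that $T_k$ is $1$-SQNE, i.e.\ a cutter, with $\fix T_k = \bigcap_{i \in I_k}\fix U_i \supseteq C$. In the composition case \eqref{eq:def:composition}, set $V_k := \prod_{i\in I_k}U_i$; Theorem \ref{th:SQNE}(ii) shows that $V_k$ is SQNE, and in particular quasi-nonexpansive, with $\fix V_k = \bigcap_{i\in I_k}\fix U_i \supseteq C$. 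Then Corollary \ref{th:cuttersAndQNE:corollary} applied with $\rho=0$ tells us that $T_k = \id + \tfrac{1}{2}(V_k - \id)$ is a cutter with $\fix T_k = \fix V_k \supseteq C$.

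For \eqref{eq:Cond:TkRegularity}, fix a subsequence $\{n_k\}$ along which $\sum_{l=0}^{s-1}\|T_{n_k-l}x^{n_k-l} - x^{n_k-l}\| \to 0$. The first task is to show that $d(x^{n_k-l}, C_i) \to 0$ for every $l \in \{0,\ldots,s-1\}$ and every $i \in I_{n_k-l}$. In the simultaneous case, I would average the standard cutter inequality $\|U_i y - y\|^2 \leq \langle U_i y - y, z - y\rangle$ (valid for any $z \in C$) with weights $\omega_i^{n_k-l}$ to obtain $\sum_{i\in I_{n_k-l}}\omega_i^{n_k-l}\|U_i x^{n_k-l} - x^{n_k-l}\|^2 \leq \|T_{n_k-l}x^{n_k-l} - x^{n_k-l}\|\cdot\|z - x^{n_k-l}\|$; boundedness of $\{x^k\}$ (from Theorem \ref{th:main}) together with $\omega_i^k \geq \varepsilon$ then forces $\|U_i x^{n_k-l} - x^{n_k-l}\| \to 0$, and bounded regularity of $U_i$ yields the desired $d(x^{n_k-l}, C_i) \to 0$. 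In the composition case, list $I_{n_k-l} = \{i_1,\ldots,i_m\}$ in composition order and set $y_0 := x^{n_k-l}$, $y_j := U_{i_j}y_{j-1}$; the $1$-SQNE property of each $U_{i_j}$ telescopes to $\sum_j \|y_j - y_{j-1}\|^2 \leq \|x^{n_k-l} - z\|^2 - \|V_{n_k-l}x^{n_k-l} - z\|^2$ for any fixed $z \in C$, and the right-hand side tends to zero by boundedness and the identity $\|V_{n_k-l}x^{n_k-l} - x^{n_k-l}\| = 2\|T_{n_k-l}x^{n_k-l} - x^{n_k-l}\| \to 0$. This gives $\|U_{i_j} y_{j-1} - y_{j-1}\| \to 0$; I would apply bounded regularity of $U_{i_j}$ at the bounded iterate $y_{j-1}$ to get $d(y_{j-1}, C_{i_j}) \to 0$, and then transfer to $x^{n_k-l}$ via $\|y_{j-1} - x^{n_k-l}\| \to 0$.

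Once $d(x^{n_k-l}, C_i) \to 0$ is in hand, I would use Lemma \ref{th:zkxk}(ii) applied to each shift $l$ separately to obtain $\|x^{n_k-l+1} - x^{n_k-l}\| \to 0$, and telescope to $\|x^{n_k} - x^{n_k-l}\| \to 0$, hence $d(x^{n_k}, C_i) \to 0$ for every such pair $(l, i)$. The intermittent control $I = I_{n_k-s+1}\cup\cdots\cup I_{n_k}$ yields $d(x^{n_k}, C_i) \to 0$ for every $i \in I$, and since $I$ is finite, $\max_{i\in I} d(x^{n_k}, C_i) \to 0$. Bounded regularity of the family $\{C_i\mid i\in I\}$ then produces $d(x^{n_k}, C) \to 0$, verifying \eqref{eq:Cond:TkRegularity} and closing the argument via Theorem \ref{th:main}. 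The main technical hurdle lies in the composition case of the per-constraint step: since cutters need not be nonexpansive, one cannot directly convert a residual bound at $y_{j-1}$ into one at $x^{n_k-l}$; the workaround is to apply bounded regularity \emph{first} at the intermediate iterate and only then transfer via the smallness of $\|y_{j-1} - x^{n_k-l}\|$, which is itself controlled by the same telescoping sum that drives the residual decay.
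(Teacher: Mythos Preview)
Your proof is correct and follows the same overall architecture as the paper: verify that each $T_k$ is a cutter with $C\subseteq\fix T_k$, establish the regularity implication \eqref{eq:Cond:TkRegularity}, and invoke Theorem~\ref{th:main}. The only substantive difference is that the paper outsources the per-constraint step --- namely that $\|T_{n_k-l}x^{n_k-l}-x^{n_k-l}\|\to 0$ forces $\max_{i\in I_{n_k-l}}d(x^{n_k-l},\fix U_i)\to 0$ --- to an external reference (\cite[Lemma 3.5]{ReichZalas2015}), whereas you supply direct elementary arguments in both the simultaneous case (averaging the cutter inequality) and the composition case (telescoping the $1$-SQNE estimates and applying bounded regularity at the intermediate points $y_{j-1}$). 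Your treatment of the composition case, including the observation that bounded regularity should be applied at $y_{j-1}$ before transferring back to $x^{n_k-l}$, is exactly the right way to avoid the non-nonexpansivity issue you flagged. The remaining steps --- invoking Lemma~\ref{th:zkxk}(ii) to collapse the $s$ consecutive iterates, using the $s$-intermittent control to cover all of $I$, and then applying bounded regularity of the family $\{C_i\}$ --- coincide with the paper's proof verbatim.
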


\begin{proof}
We begin with several simple observations. The first one is that for each $k=0,1,2,\ldots,$ the operator $T_k$ defined by either \eqref{eq:def:convex} or \eqref{eq:def:composition} is a cutter such that $C\subseteq \fix T_k=\bigcap_{i\in I_k} \fix U_i$. This follows from Theorem \ref{th:SQNE} and Corollary \ref{th:cuttersAndQNE:corollary}. Therefore it is reasonable to consider an outer approximation method with these particular algorithmic operators $T_k$. Consequently, by Theorem \ref{th:main}, $\{x^k\}_{k=0}^\infty$ is bounded.

Another observation is that, by \cite[Lemma 3.5]{ReichZalas2015}, for each subsequence $\{n_k\}_{k=0}^\infty \subseteq\{k\}_{k=0}^\infty$ and $l\in \{0,\ldots,s-1\}$, we have

\begin{equation}\label{eq:th:main2:proof:1}
\lim_{k\rightarrow\infty}\| T_{n_k-l} x^{n_k%
-l} -x^{n_k-l}\|=0  \quad \Longrightarrow \quad
\lim_{k\rightarrow\infty} \max_{i\in I_{n_k-l}} d(x^{n_k-l}, \fix U_i)=0.
\end{equation}
In order to complete the proof, in view of Theorem \ref{th:main} and the bounded regularity of $\{C_i \mid i\in I\}$, it suffices to show that
\begin{equation}\label{eq:th:main2:proof:step2:1}
\lim_{k\rightarrow\infty}\sum_{l=0}^{s-1}\| T_{n_k-l} x^{n_k%
-l} -x^{n_k-l}\|=0 \quad \Longrightarrow \quad
\lim_{k\rightarrow\infty} \max_{i\in I} d(x^{n_k}, \fix U_i)=0.
\end{equation}
Indeed, assume that the left-hand side of (\ref{eq:th:main2:proof:step2:1}) holds, that is,
\begin{equation}\label{eq:th:main2:proof:step2:2}
\lim_{k\rightarrow\infty}\sum_{l=0}^{s-1}\| T_{n_k-l} x^{n_k%
-l} -x^{n_k-l}\|=0,
\end{equation}
which, by \eqref{eq:th:main2:proof:1}, implies that for each $l=0,1,\ldots,s-1$,
\begin{equation}\label{eq:th:main2:proof:step2:3}
\lim_{k\rightarrow\infty} \max_{j\in I_{n_k-l}} d(x^{n_k-l}, \fix U_j)=0.
\end{equation}
Again by (\ref{eq:th:main2:proof:step2:2}), the triangle inequality and Lemma \ref{th:zkxk} (ii) applied to $n_k\leftarrow(n_k-l)$, for every $l=1,2,\ldots,s-1$, we get
\begin{equation} \label{eq:th:main2:proof:step2:4}
\lim_{k\rightarrow\infty}\|x^{n_k}-x^{n_k-l}\|=0.
\end{equation}

Let $i\in I$. The control $\{I_k\}_{k=0}^\infty$ satisfies $I=I_{n_k}\cup I_{n_k-1}\cup\ldots I_{n_k -s+1}$ for all $k\geq 0$. Consequently, for each $k\geq s-1$, there is $l_k\in\{0,\ldots,s-1\}$ such that $i\in I_{n_k-l_k}$. By the definition of the metric projection and the triangle inequality, we have
\begin{align}\label{eq:th:main2:proof:step2:6}\nonumber
d(x^{n_k}, \fix U_i) &=\|P_{\fix U_i}x^{n_k}-x^{n_k}\|\leq \|P_{\fix U_i}x^{n_k-l_k}-x^{n_k}\|\\ \nonumber
&\leq \|P_{\fix U_i}x^{n_k-l_k}-x^{n_k-l_k}\|+\|x^{n_k}-x^{n_k-l_k}\|\\
&= d(x^{n_k-l_k}, \fix U_i) +\|x^{n_k}-x^{n_k-l_k}\|.
\end{align}
Therefore (\ref{eq:th:main2:proof:step2:6}), (\ref{eq:th:main2:proof:step2:4}) and (\ref{eq:th:main2:proof:step2:3}) imply that
\begin{equation} \label{eq:th:main2:proof:step2:7}
\lim_{k\rightarrow\infty} \max_{i\in I}d (x^{n_k}, \fix U_i)=0,
\end{equation}
which completes the proof.
\end{proof}

\begin{theorem} \label{th:main3}
Let $F\colon\mathcal H\rightarrow\mathcal H$ be $L$-Lipschitz continuous and $\alpha$-strongly monotone, and assume that for each $i\in I$, we have
$ C_i=\fix U_i=p_i^{-1}(0)$ for some cutter operator $U_i\colon\mathcal H\rightarrow \mathcal H$ and a proximity function $p_i\colon\mathcal H\rightarrow [0,\infty)$. Let the sequence $\{x^k\}_{k=0}^\infty$ be defined by the outer approximation method \eqref{eq:def:xk}--\eqref{eq:def:Hk} and for every $k=0,1,2,\ldots,$ let $T_k\colon\mathcal H\rightarrow\mathcal H$ be defined by the maximum proximity algorithmic operator
\begin{equation}\label{eq:def:maxProx}
T_k:=U_{i_k}, \quad \text{where} \quad i_k=\argmax_{i\in I_k}p_i(x^k),
\end{equation}
and where $I_k\subseteq I$.

Then the sequence $\{x^k\}_{k=0}^\infty$ is bounded. Moreover, if $\lim_{k\rightarrow \infty}\lambda_k=0$, $U_i$ is boundedly regular for every $i\in I$, $\{C_i \mid i\in I\}$ is boundedly regular, there is an integer $s\geq 1$ such that $I= I_{k-s+1}\cup\ldots\cup I_k$ for all $k\geq s$ and for every bounded $\{y^k\}_{k=0}^\infty\subseteq\mathcal H$, we have $\lim_{k\rightarrow\infty}p_i(y^k)=0 \Longleftrightarrow \lim_{k\rightarrow\infty}d(y^k,C_i)=0$, then $\lim_{k\rightarrow\infty} d(x^k,C)=0$. If, in addition, $\sum_{k=0}^\infty\lambda_k=\infty$, then the sequence $\{x^k\}_{k=0}^\infty$ converges in norm to the unique solution of VI($F$, $C$).
\end{theorem}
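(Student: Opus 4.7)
The plan is to adapt the proof of Theorem \ref{th:main2} by verifying the hypotheses of Theorem \ref{th:main} for the particular choice \eqref{eq:def:maxProx}. Since each $U_{i_k}$ is a cutter and $C \subseteq \bigcap_{i\in I}\fix U_i \subseteq \fix U_{i_k}=\fix T_k$, Theorem \ref{th:main} immediately yields boundedness of $\{x^k\}_{k=0}^\infty$. It thus suffices to verify the regularity condition \eqref{eq:Cond:TkRegularity} in order to obtain $d(x^k,C)\to 0$ and, together with $\sum_{k=0}^\infty \lambda_k=\infty$, norm convergence to the unique solution of VI($F$, $C$).

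Fix a subsequence $\{n_k\}_{k=0}^\infty\subseteq\{k\}_{k=0}^\infty$ and assume that $\sum_{l=0}^{s-1}\|T_{n_k-l}x^{n_k-l}-x^{n_k-l}\|\to 0$. In particular, for every $l\in\{0,\ldots,s-1\}$, $\|U_{i_{n_k-l}}x^{n_k-l}-x^{n_k-l}\|\to 0$. Because $I$ is finite, I would decompose $\mathbb N$ into the sets $J_i:=\{k\mid i_{n_k-l}=i\}$ for $i\in I$; on each (nonempty) $J_i$, the bounded regularity of the fixed operator $U_i$, together with the boundedness of $\{x^k\}$, gives $d(x^{n_k-l},C_i)\to 0$ along $J_i$. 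Gluing these finitely many conclusions yields $d(x^{n_k-l},C_{i_{n_k-l}})\to 0$ on the full sequence, hence by the assumed equivalence $p_{i_{n_k-l}}(x^{n_k-l})\to 0$. The maximum-proximity rule identifies this value with $\max_{i\in I_{n_k-l}}p_i(x^{n_k-l})$, so $p_i(x^{n_k-l})\to 0$ for every $i\in I_{n_k-l}$, and invoking the equivalence once more produces $d(x^{n_k-l},C_i)\to 0$ for every $i\in I_{n_k-l}$.

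Next, I would apply Lemma \ref{th:zkxk}(ii) along each shifted subsequence $\{n_k-l\}_k$ to obtain $\|x^{n_k-l+1}-x^{n_k-l}\|\to 0$, and telescope to get $\|x^{n_k}-x^{n_k-l}\|\to 0$ for every $l=0,\ldots,s-1$. The triangle-type estimate $d(x^{n_k},C_i)\leq d(x^{n_k-l},C_i)+\|x^{n_k}-x^{n_k-l}\|$ (exactly as in \eqref{eq:th:main2:proof:step2:6}) then transfers the distance bound from $x^{n_k-l}$ to $x^{n_k}$. The $s$-intermittent control $I=\bigcup_{l=0}^{s-1}I_{n_k-l}$ allows us to range over all $i\in I$, so $\max_{i\in I}d(x^{n_k},C_i)\to 0$, and the bounded regularity of the family $\{C_i\mid i\in I\}$ finally delivers $d(x^{n_k},C)\to 0$. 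This is exactly \eqref{eq:Cond:TkRegularity}, and the second half of Theorem \ref{th:main} completes the proof.

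The main obstacle, and the one genuinely new ingredient beyond the proof of Theorem \ref{th:main2}, is the passage from $\|U_{i_{n_k-l}}x^{n_k-l}-x^{n_k-l}\|\to 0$ to $d(x^{n_k-l},C_{i_{n_k-l}})\to 0$ when the index $i_{n_k-l}$ varies with $k$: the bounded regularity hypothesis is formulated for a fixed operator. The finite cardinality of $I$, combined with the partition-by-value argument sketched above, is precisely what bridges this gap, and it is here that the equivalence between $p_i$ and $d(\cdot,C_i)$ plays a decisive role, since it is this equivalence which subsequently lets the $\argmax$ selection propagate the convergence from the single chosen index to all indices in $I_{n_k-l}$.
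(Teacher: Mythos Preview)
Your proposal is correct and follows precisely the route the paper takes: the paper's proof consists of the single observation that implication \eqref{eq:th:main2:proof:1} holds for $T_k$ defined by \eqref{eq:def:maxProx}, after which the argument of Theorem \ref{th:main2} applies verbatim, and your partition-by-index argument combined with the $p_i\leftrightarrow d(\cdot,C_i)$ equivalence and the $\argmax$ rule is exactly the verification of \eqref{eq:th:main2:proof:1} that the paper leaves implicit. Your identification of this step as ``the one genuinely new ingredient'' matches the paper's own assessment.
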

\begin{proof}
  Note that one can easily see that \eqref{eq:th:main2:proof:1} holds true for $T_k$ defined in \eqref{eq:def:maxProx}. Therefore the proof remains the same as for Theorem \ref{th:main2}.
\end{proof}
This type of the maximum proximity algorithmic operator can be found in \cite{KolobovReichZalas2016} although its projected variant can also be found in \cite[Section 5.8.4.1]{Cegielski2012}.

The relation $C_i=\fix U_i=p_i^{-1}(0)$ becomes clearer once we assume that the computation of $p_i$ is at most as difficult as the evaluation of $U_i$ and this is at most as difficult as projecting onto $C_i$. These assumptions are satisfied if, for example, the set $C_i=\{z\in\mathcal H\mid f_i(z)\leq 0\}$ is a sublevel set of a convex functional $f_i$, the operator $U_i=P_{f_i}$ is a subgradient projection and the proximity $p_i=f^+_i$, where $f_i^+(x):=\max \{0, f_i(x)\}$. The remaining part is to verify whether $p_i(y^k)\rightarrow 0 \Longleftrightarrow d(y^k,C_i)\rightarrow 0$, which we show in the following lemma.

\begin{lemma}\label{th:regularityOfSubPro}
Let $f_i\colon\mathbb R^n\rightarrow \mathbb R$ be convex and assume that $S(f_i,0)\neq\emptyset$, $i\in I$. Moreover, let $\{i_k\}_{k=0}^\infty\subseteq I$. Then for every bounded sequence $\{y^k\}_{k=0}^\infty$, we have
\begin{equation}
  \lim_{k\rightarrow \infty}\|P_{f_{i_k}}y^k-y^k\|=0 \quad \Longleftrightarrow \quad \lim_{k\rightarrow \infty} f_{i_k}^+(y^k)=0 \quad \Longleftrightarrow \quad \lim_{k\rightarrow \infty} d(y^k, S(f_{i_k},0))=0.
\end{equation}

\end{lemma}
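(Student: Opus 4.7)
The plan is to reduce the three conditions to one another via the key identity
\[
f_{i_k}^+(y^k) \;=\; \|g_{f_{i_k}}(y^k)\|\cdot\|P_{f_{i_k}}y^k - y^k\|,
\]
which follows at once from the definition of the subgradient projection in Example~\ref{ex:def:SubPro} (in the equivalent convention using ``$f(x)>0$''; when $f_{i_k}(y^k)\le 0$ both sides vanish). Throughout, I would exploit three standard features of the finite-dimensional setting: bounded subsets of $\mathbb{R}^n$ are relatively compact; the index set $I$ is finite, so one may pass to a subsequence on which $i_k\equiv i$ is constant; and a real-valued convex function on $\mathbb{R}^n$ is locally Lipschitz, so its subdifferential is locally bounded and has closed graph.

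The two ``easy'' implications follow directly. Since $\{y^k\}$ is bounded and $I$ is finite, the local Lipschitz constants provide a uniform bound $\|g_{f_{i_k}}(y^k)\|\le M$, and the identity yields $f_{i_k}^+(y^k)\le M\,\|P_{f_{i_k}}y^k - y^k\|$, giving $\|P_{f_{i_k}}y^k - y^k\|\to 0\Rightarrow f_{i_k}^+(y^k)\to 0$. For $d(y^k,S(f_{i_k},0))\to 0\Rightarrow f_{i_k}^+(y^k)\to 0$, I would pick near-projections $z^k\in S(f_{i_k},0)$ with $\|y^k - z^k\|\to 0$ and use the same local Lipschitz constant to obtain $f_{i_k}(y^k)\le f_{i_k}(z^k)+L\|y^k - z^k\|\to 0$.

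The remaining implications I would handle by a single contradiction argument. Assume that one of $f_{i_k}^+(y^k)\to 0$ or $d(y^k,S(f_{i_k},0))\to 0$ holds, yet $\|P_{f_{i_k}}y^k - y^k\|\ge \delta>0$ along a subsequence. Pass to a further subsequence along which $y^k\to y^*$ and $i_k\equiv i$. I then claim $y^*\in S(f_i,0)=\fix P_{f_i}$: in the distance case this is closedness of $S(f_i,0)$; in the subgradient case the identity forces $\|g_{f_i}(y^k)\|\le f_{i_k}^+(y^k)/\delta\to 0$, so the closed graph of $\partial f_i$ yields $0\in\partial f_i(y^*)$, making $y^*$ a minimizer of $f_i$, and combining $f_i(y^*)=\lim f_i(y^k)\le 0$ with $S(f_i,0)\neq\emptyset$ gives $y^*\in S(f_i,0)$. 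Since $P_{f_i}$ is a cutter, it is quasi-nonexpansive, so $\|P_{f_i}y^k - y^*\|\le\|y^k - y^*\|\to 0$, whence $\|P_{f_i}y^k - y^k\|\to 0$, contradicting the lower bound $\delta$. The implication $f_{i_k}^+(y^k)\to 0\Rightarrow d(y^k,S(f_{i_k},0))\to 0$ is even shorter: the same extraction puts $y^*\in S(f_i,0)$ by continuity of $f_i$, so $d(y^k,S(f_i,0))\to 0$ along that subsequence.

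The main obstacle is the subgradient-to-projection direction, where the identity becomes uninformative precisely when $\|g_{f_i}(y^k)\|\to 0$; this is where outer semicontinuity of $\partial f_i$ (a finite-dimensional phenomenon, as already exploited in Example~\ref{ex:SubProjandDCPrinciple}) has to be combined with the cutter property of $P_{f_i}$ to close the argument.
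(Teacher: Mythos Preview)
Your proof is correct and self-contained, but it takes a different route from the paper. Both arguments first reduce to a constant index $i_k\equiv i$ by exploiting the finiteness of $I$ (the paper decomposes $K=\{0,1,2,\ldots\}$ into $K_i=\{k: i_k=i\}$; you do the same via subsequences). For a single $i$, however, the paper does not argue from scratch: the two forward implications are delegated to the proof of \cite[Lemma~24]{CegielskiZalas2013a}, which establishes the bounded regularity of $P_{f_i}$, and the loop is closed by the one-line cutter inequality $\|P_{f_i}x-x\|\le d(x,\fix P_{f_i})$, yielding $d(y^k,S(f_i,0))\to 0\Rightarrow\|P_{f_i}y^k-y^k\|\to 0$ with no compactness needed. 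Your approach instead develops everything directly via the identity $f^+=\|g_f\|\cdot\|P_f-\id\|$, local Lipschitzness of convex functions on $\mathbb{R}^n$, and a compactness/QNE contradiction. This buys independence from the external reference at the cost of length. Two simplifications are available: in your implication $f_{i_k}^+(y^k)\to 0\Rightarrow\|P_{f_{i_k}}y^k-y^k\|\to 0$, the closed-graph detour for $\partial f_i$ is unnecessary, since continuity of $f_i$ and $f_i^+(y^k)\to 0$ already force $f_i(y^*)\le 0$, i.e.\ $y^*\in S(f_i,0)=\fix P_{f_i}$, after which QNE finishes the contradiction; and the cutter inequality dispatches the implication from the distance to the projection step without any subsequence extraction.
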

\begin{proof}
First, assume that $I=\{i\}$. Observe that the implications ``$\Longrightarrow$'' follow directly from the proof of \cite[Lemma 24]{CegielskiZalas2013a}, which indicates that $P_{f_i}$ is boundedly regular. Note that since $P_{f_i}$ is a cutter, we have  $\|P_{f_i}x-x\|\leq d(x,\fix P_{f_i})$ for every $x\in\mathbb R^n$, where $\fix P_{f_i}=S(f_i,0)$. This shows equivalence for $I=\{i\}$. Now we assume that $I=\{1,\ldots,m\}$. To complete the proof we decompose the set $K=\{0,1,2,\ldots\}$ into subsets $K_i:=\{k\in K \mid i_k=i\}$. After doing this, we can repeat the first argument for every component $K_i$, separately.
\end{proof}

The condition that $I\subseteq I_{k-s+1}\cup\ldots\cup I_k$ for all $k\geq s$ and some $s\geq 1$ appears in the literature as $s$-\textit{intermittent control}, whereas for $|I_k|=1$ it is known as $s$-\textit{almost cyclic}; see, for example \cite[Definition 3.18]{BauschkeBorwein1996}. We comment now on a practical realization of this condition in the context of projection and subgradient projection algorithms.

\begin{example}[Block projection algorithms]\label{ex:metProjOper}
Let $C=\bigcap_{i\in I} C_i$, and set $U_i=P_{C_i}$ and $p_i=d(\cdot,C_i)$. For a fixed block size $1\leq b \leq m$, let $I_0=\{1,\ldots,b\}$ and let $l_k$ be the last index from a given $I_k$.
We define $I_{k+1}=(\{l_k,\ldots,l_k+b-1\}\ \text{ mod } b)+1$. In principle, $I_k$ consists of the next $b$ indices following $l_k$, which in the case of $b$ dividing $m$ is nothing but a cyclic way of changing fixed blocks $I_1,\ldots,I_{m/b}$, each of them of size $b$. We can visualize the definition of $I_k$ in the following way:
\begin{equation}
  \underbrace{1,2,\ldots,b}_{I_0},\ \underbrace{b+1,b+2,\ldots,2b}_{I_1},\ \ldots\ ,
  \underbrace{m-1,m, 1, 2,\ldots,b-2}_{I_k},\ \underbrace{b-1,b,\ \ldots\ }_{I_{k+1}}
\end{equation}
Following Theorems \ref{th:main2} and \ref{th:main3}, we have the following examples of projection algorithmic operators which determine our outer approximation method:
\begin{enumerate}[a)]
  \item cyclic projection operator: $T_k=P_{C_{[k]}}$, where $[k]=(k \text{ mod }m)+1$;
  \item remotest-set projection operator: $T_k:=P_{C_{i_k}}$, where $i_k=\argmax_{i\in I_k}d(x^k,C_i)$; see Theorem \ref{th:main3};
  \item simultaneous projection operator: $T_k:=\frac{1}{|I_k|}\sum_{i\in I_k}P_{C_i}$;
  \item composition projection operator: $T_k:=\frac 1 2 (\id +\prod_{i\in I_k}P_{C_i})$.
\end{enumerate}

\end{example}

\begin{example}[Block subgradient projection algorithms in $\mathbb R^n$]\label{ex:subProjOper}
Let $C=\bigcap_{i\in I} C_i$, where $C_i=\{z\in\mathbb R^n\mid f_i(z)\leq 0\}$ and $f_i\colon\mathbb R^n\rightarrow \mathbb R$ is convex.  We set $U_i=P_{f_i}$, see Example \ref{ex:def:SubPro}, and $p_i=f_i^+$. For a fixed block size $1\leq b \leq m$ we define $I_k$ as in Example \ref{ex:metProjOper}. Again, following Theorems \ref{th:main2} and \ref{th:main3}, we have the following examples of subgradient projection algorithmic operators which determine our outer approximation method:
\begin{enumerate}[a)]
  \item cyclic subgradient projection operator: $T_k=P_{f_{[k]}}$, where $[k]=(k \text{ mod }m)+1$;
  \item most-violated constraint subgradient projection operator: $T_k:=P_{f_{i_k}}$, where $i_k=\argmax_{i\in I_k}f_i^+(x^k)$; see Theorem \ref{th:main3} and Lemma \ref{th:regularityOfSubPro};
  \item simultaneous subgradient projection operator: $T_k:=\frac{1}{|I_k|}\sum_{i\in I_k}P_{f_i}$;
  \item composition subgradient projection operator: $T_k:=\frac 1 2 (\id +\prod_{i\in I_k}P_{f_i})$.
\end{enumerate}
\end{example}

\begin{example}[Augmented block size]\label{ex:augmentedBlock}
Using algorithmic operators over a block of size smaller than $m$ is of practical importance when $m$ is a large number. Therefore we propose to slightly modify the definition of $I_k$ from Examples \ref{ex:metProjOper} and \ref{ex:subProjOper} to obtain an \textit{augmented block}, where $|I_k|=b_k\geq b$. Indeed, we define $I_k$ in a similar ``cyclic'' order, but for the simultaneous and maximum proximity algorithmic operators we want $I_k$ to satisfy
\begin{equation}\label{eq:augmented1}
  |\{i\in I_k \mid \text{constraint $i$ is active at } x^k\}|=b
\end{equation}
if the number of active constraints is greater than or equal $b$. If this is not possible, then we simply set $I_k:=I$. The case of composition methods is slightly different, where we demand that
\begin{equation}\label{eq:augmented2}
  |\{i_t \in I_k=(i_1,\ldots,i_{b_k}) \mid  \text{constraint $i_t$ is active at } U_{i_{t-1}}\ldots U_1x^k\}|=b
\end{equation}
and we set $I_k:=I$ if condition \eqref{eq:augmented2} cannot be satisfied. Therefore, for a fixed $b$, we denote the size of the augmented block by $|I_k|=b+$. Using algorithmic operators over the augmented block, we may significantly accelerate the outer approximation method as we show in the last section of this paper.
\end{example}

\begin{remark}
We would like to mention that there are many more algorithmic operators $T_k$ available in the literature that one could combine with the outer approximation method; see, for example, the definition of dynamic string averaging projection from \cite{CensorZaslavski2013}, modular string averaging from \cite{ReichZalas2015} and double-layer fixed point algorithm from \cite{KolobovReichZalas2016}. Moreover, there are many more adaptive definitions of the convex combinations coefficients, for example,
\begin{equation}\label{eq:omegaikAdaptiveUi}
\omega_i^k:=\frac{p_i(x^k)}{\sum_{i\in I_k}p_i(x^k)}\quad \text{or}\quad
\omega_i^k:=\frac{\|U_i x^k-x^k\|}{\sum_{i\in I_k}\|U_ix^k-x^k\|}.
\end{equation}
Nevertheless, in order to ease the readability of this paper, we focus only on the maximum proximity, simultaneous and composition variants of the outer approximation method.
\end{remark}

\begin{remark}[Polyhedral case and cyclic control]
Consider the outer approximation method (\ref{eq:def:xk}) combined with a cyclic control with relaxation parameters $\alpha_k$ equal to $1$, that is, $T_k=U_{[k]}$ and $[k]= (k \text{ mod }m)+1$. If for each $i\in I$ the operator $U_i$ is a metric projection onto a half-space $C_i=\{z\in\mathcal H\mid \langle a_i,z \rangle\leq \beta_i\}$, then
for each $k=0,1,2,\ldots,$ we have $H_k=C_{[k]}$ whenever $x^k\notin C_{[k]}$ and $H_k=\mathcal H$ otherwise. Therefore in this case we can rewrite method (\ref{eq:def:xk}) in the following form:
\begin{equation}
x^0\in\mathcal H;\qquad x^{k+1}:=P_{C_{[k]}}\left(x^k-\lambda_k Fx^k\right), \text{ for } k=0,1,2,\ldots.
\end{equation}
\end{remark}

\section{Numerical results}

We consider the following best approximation problem:
\begin{equation}
\text{Find}\ x^*\in \Argmin_{z\in C} \frac 1 2 \|z-a\|^2,
\end{equation}
where $a\in\mathbb R^{20}$ is a given vector and $C:=\{z\in \mathbb R^{20}\mid Az\leq b\}$ for some matrix $A\in \mathbb R^{100 \times 20}$. Thus, for each $i\in I:=\{1,\ldots,100\}$, the subset $C_i=\{z\in\mathbb R^{20}\mid \langle a_i,z\rangle\leq b_i \}$ is a half-space. It is not difficult to see that this problem is equivalent to the variational inequality with $F:=\id-a$ and $C=\bigcap_{i\in I} C_i$. We set
\begin{equation}
p_i(x):=(\langle a_i,x\rangle-b_i)_+; \qquad U_ix:=P_{C_i}x=x-\frac{p_i(x)}{\|a_i\|^2}a_i
\end{equation}
and $\lambda_k:=\frac 1 {k+1}$ for $k=0,1,2,\ldots$. We recall that $(x)_+:=\max \{0, x\}$.

Following Theorems \ref{th:main2} and \ref{th:main3}, we consider the outer approximation method \eqref{eq:def:xk}--\eqref{eq:def:Hk} with the following projection operators (PO):
\begin{itemize}
  \item Cyclic PO: $T_k:=U_{[k]}$, where $[k]:=(k \text{ mod } 100)+1$;
  \item Maximum proximity PO: $T_k:=U_{i_k}$, where $i_k:=\argmax_{i\in I_k}p_i(x^k)$;
  \item Simultaneous PO: $T_k:=\frac 1 {|I_k|}\sum_{i\in I_k} P_{C_i}$;
  \item Composition PO: $T_k:=\frac 1 2 (\id +\prod_{i\in I_k}P_{C_i})$
\end{itemize}
For block algorithms we apply two types of control $\{I_k\}_{k=0}^\infty$. The first one with a fixed block size $|I_k|=b$ and the second one, with augmented block size $|I_k|=b+$; see Examples \ref{ex:metProjOper} and \ref{ex:augmentedBlock}, respectively.

For every algorithm we perform 100 simulations, while sharing the same set of randomly generated test problems. We run every algorithm till it reaches 5000 iterations. After running all of the simulations, for every iterate we compute the error $\|x^k-x^*\|$, where $x^*$ is the given solution provided by MATLAB \texttt{fmincon} solver. In order to compare our algorithms, we consider the quantity
\begin{equation}\label{eq:plotError}
\log_{10}\left(\frac{\|x^k-x^*\|}{\|x^0-x^*\|}\right).
\end{equation}
The bold line in Figures \ref{fig:1}-\ref{fig:6} indicates the median computed for (\ref{eq:plotError}). The ribbon plot represents concentrations of order 20, 40, 60 and 80\% around the median. We plot all the information per every 50 iterative steps.

We present now several observations that we have made after running the numerical simulations.
\begin{enumerate}[a)]
  \item The outer approximation method equipped with the composition algorithmic operator outperforms every other method we have considered;
  \item The convergence speed for the maximum proximity, simultaneous and composition methods is monotone with respect to a block size, that is, the larger the block is, the faster the convergence we can expect. Therefore for $b=100$ we expect the best convergence profile for all of the methods;
  \item The augmented block strategy described in Example \ref{ex:augmentedBlock} accelerates the convergence speed. This acceleration is significant in the simultaneous and composition cases;
  \item There is no need to use large blocks with $b=m$. For the maximum proximity it suffices to take $b=20$ and for augmented version even $b=10+$. Similarly, for composition type methods $b=30$ and $b=20+$ are quite close to the case of $b=100$. This can also be seen for the simultaneous projection operator with $b=50+$.
\end{enumerate}

\begin{figure}[H]
\centering
\includegraphics[bb=0 0 948 681, scale=0.35]{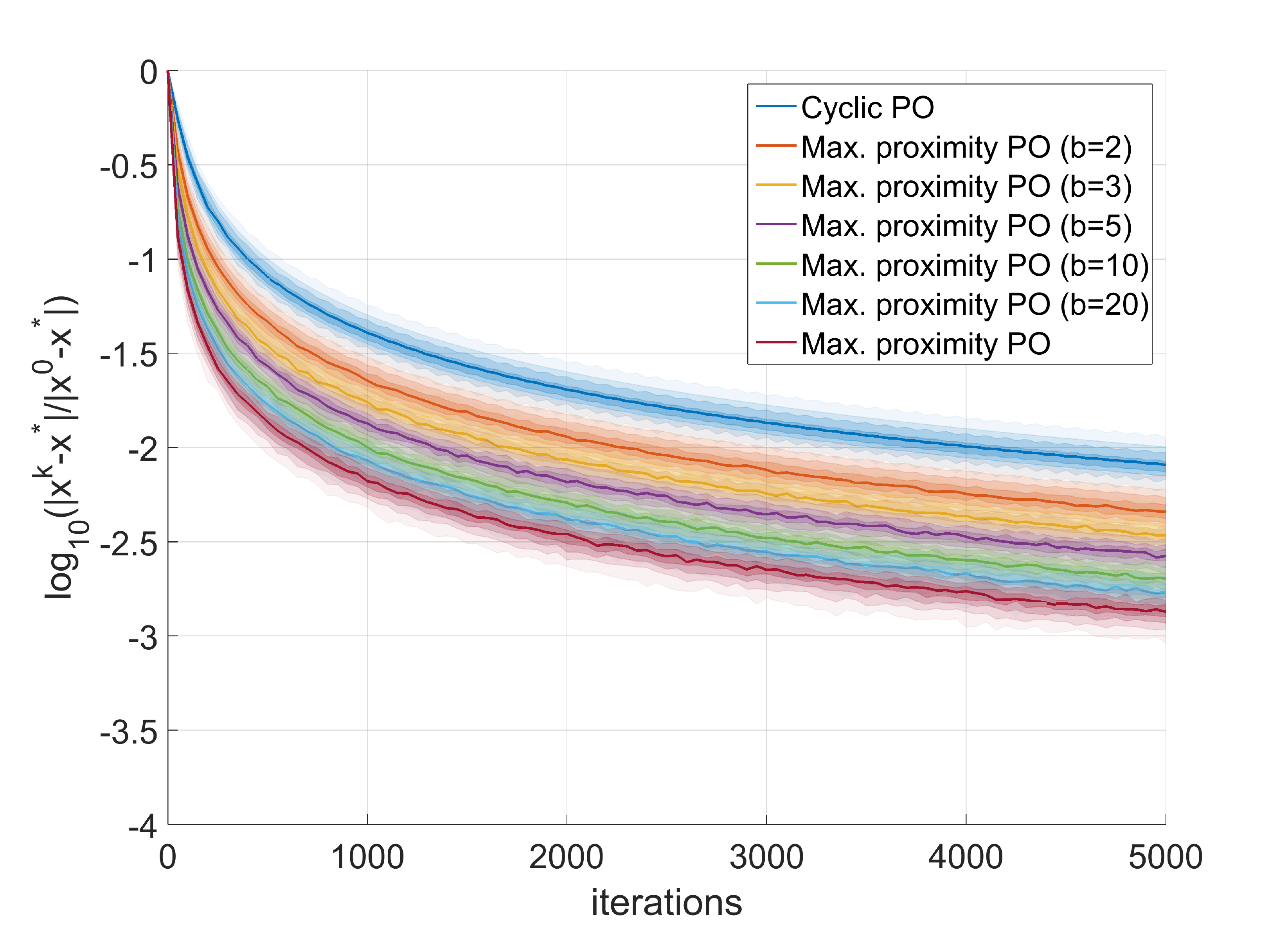}
\caption{Maximum proximity projection operator over the block $I_k$ of size $b=2,3,5,10$ and $20$. For the cyclic algorithm, $b=1$. A bold line indicates the median computed for (\ref{eq:plotError}). The ribbon plot represents concentrations of order 20, 40, 60 and 80\% around the median.}
\label{fig:1}
\end{figure}

\newpage
\begin{figure}[H]
\centering
\includegraphics[bb=0 0 948 681, scale=0.35]{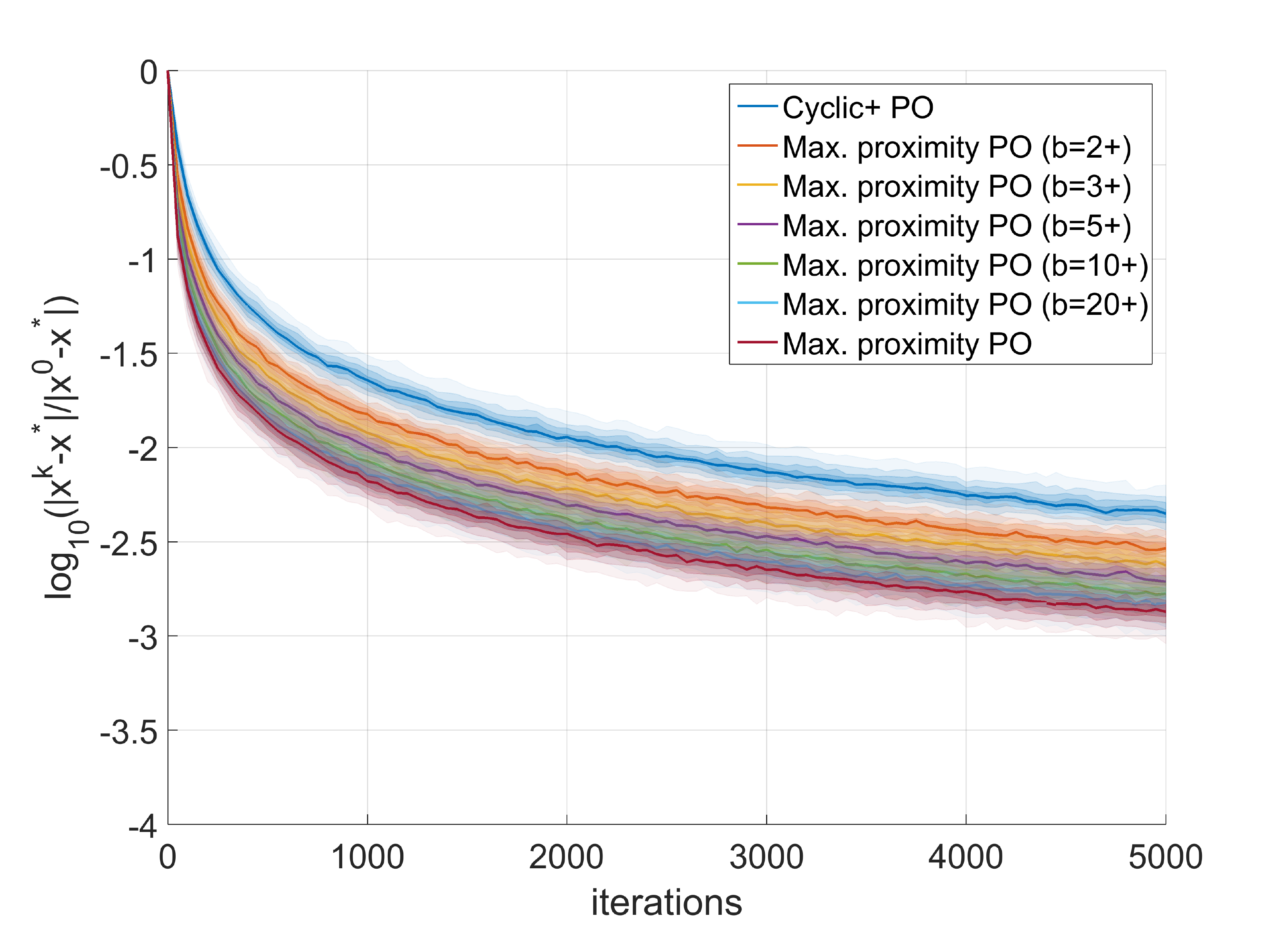}
\caption{Maximum proximity projection operator over the augmented block $I_k$ of size $b=2+,3+,5+,10+$ and $20+$; compare with Example \ref{ex:augmentedBlock}. For the ``cyclic+'' algorithm, $b=1+$. Bold lines and ribbons are the same as in Figure \ref{fig:1}.}
\label{fig:2}
\end{figure}

\begin{figure}[H]
\centering
\includegraphics[bb=0 0 948 681, scale=0.35]{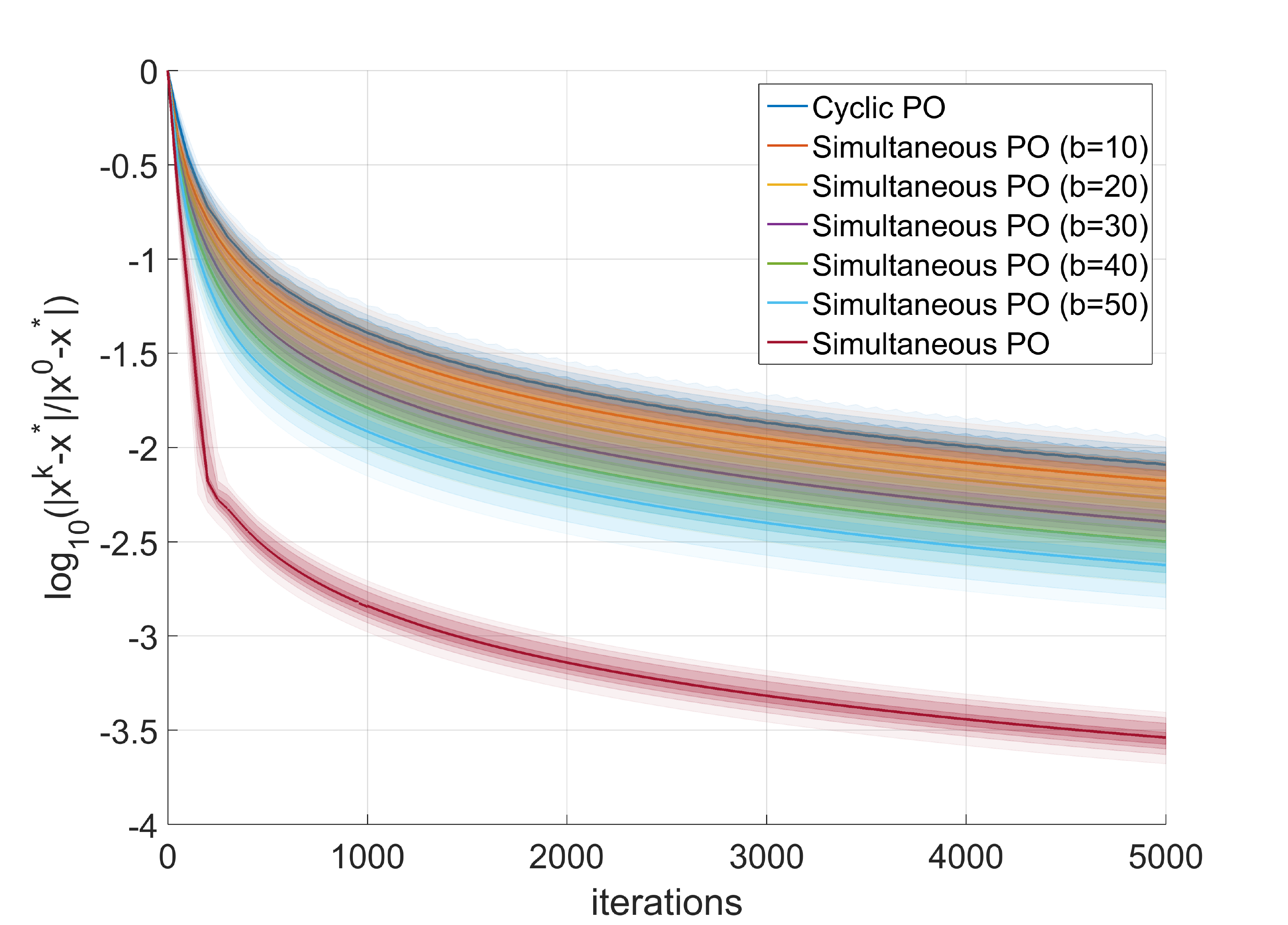}
\caption{Simultaneous projection operator over the block $I_k$ of size $b=10, 20, 30 ,40$ and $50$. For the cyclic algorithm, $b=1$ whereas for the fully simultaneous operator, $b=100$. Bold lines and ribbons are the same as in Figure \ref{fig:1}.}
\label{fig:3}
\end{figure}

\begin{figure}[H]
\centering
\includegraphics[bb=0 0 948 681, scale=0.35]{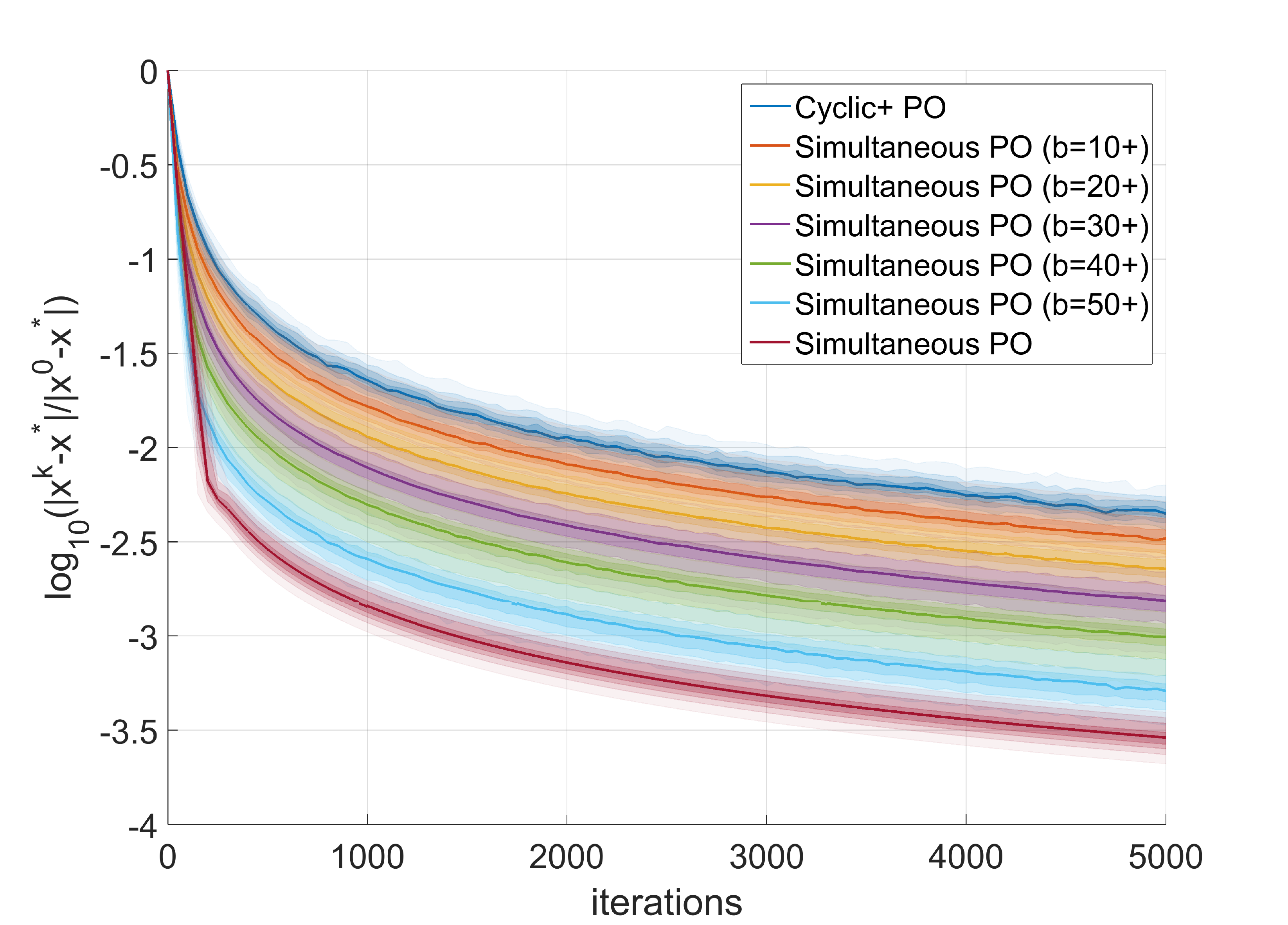}
\caption{Simultaneous projection operator over the augmented block $I_k$ of size $b=1+,10+,20+,30+,40+, 50+$ and $100$; compare with Example \ref{ex:augmentedBlock}. Again, bold lines and ribbons are the same as in Figure \ref{fig:1}.}
\label{fig:4}
\end{figure}

\begin{figure}[H]
\centering
\includegraphics[bb=0 0 948 681, scale=0.35]{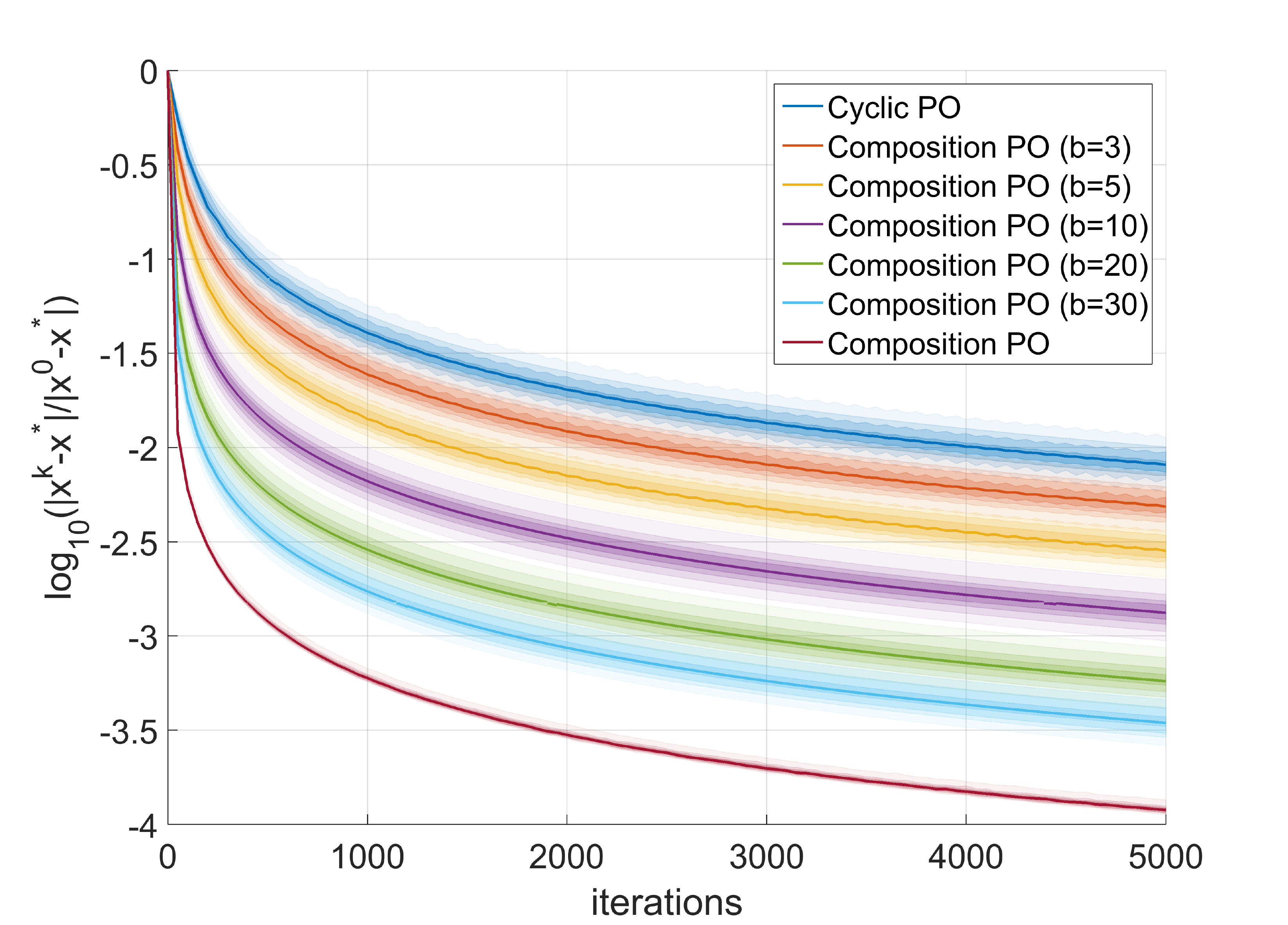}
\caption{Composition projection operator over the block $I_k$ of size $b=3,5,10,20$ and $30$. For the cyclic algorithm, $b=1$ whereas for the full composition operator, $b=100$. Bold lines and ribbons are the same as in Figure \ref{fig:1}.}
\label{fig:5}
\end{figure}

\begin{figure}[H]
\centering
\includegraphics[bb=0 0 948 681, scale=0.35]{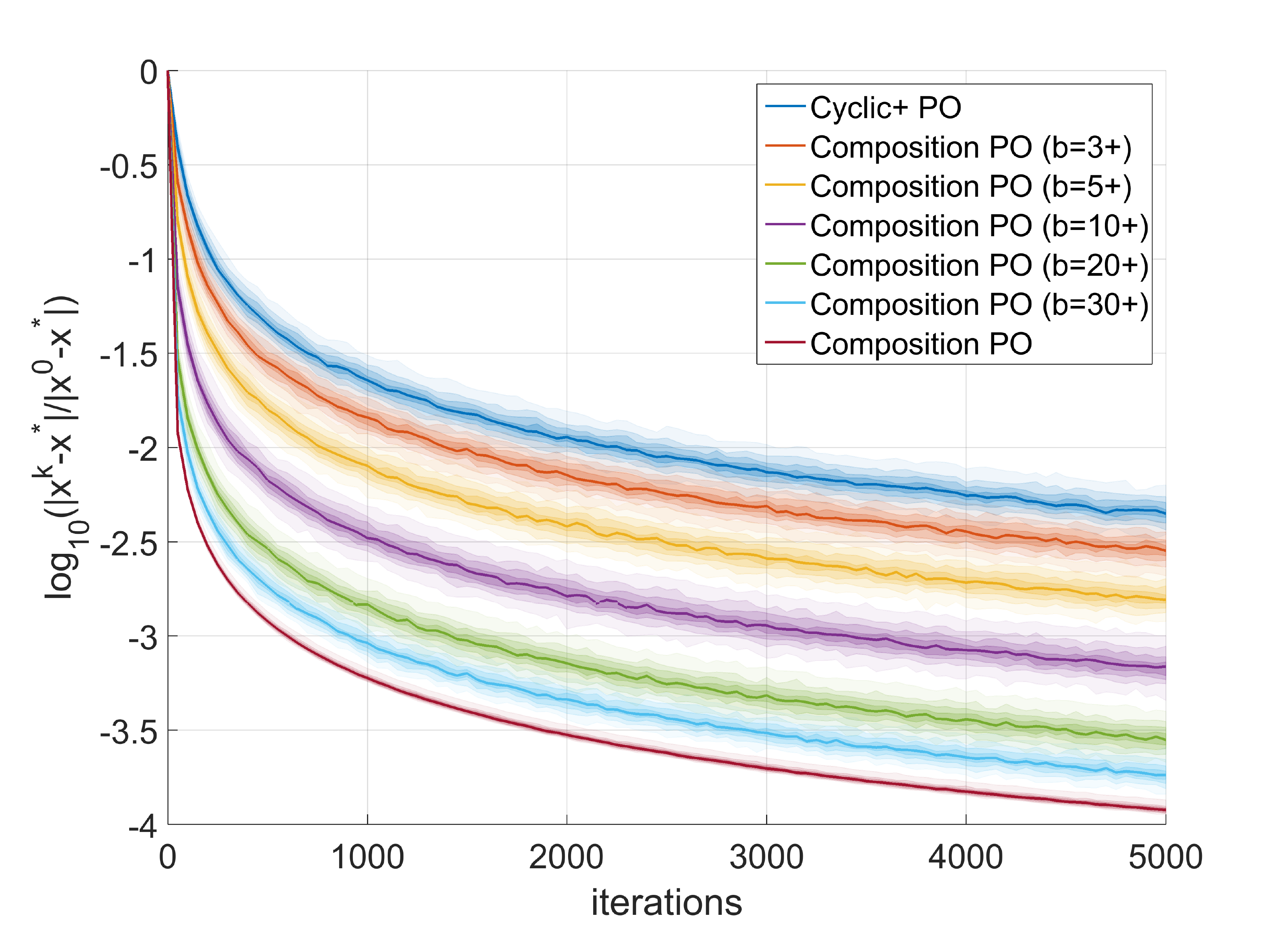}
\caption{Composition projection operator over the augmented block $I_k$ of size $b=1+,10+,20+,30+,40+, 50+$ and $100$; compare with Example \ref{ex:augmentedBlock}. Again, bold lines and ribbons are the same as in Figure \ref{fig:1}.}
\label{fig:6}
\end{figure}

\subsubsection*{Acknowledgments}
We are grateful to the anonymous referee for his/her comments and remarks.

\subsubsection*{Disclosure statement}
No potential conflict of interest was reported by the authors.

\subsubsection*{Funding}
This research was supported in part by the Israel Science Foundation (Grant 389/12), the Fund for the Promotion of Research at the Technion and by the Technion General Research Fund. The third author was financially supported by the Polish National Science Centre within the framework of the Etiuda funding scheme under agreement No. DEC-2013/08/T/ST1/00177.

\bibliographystyle{gOPT}

\end{document}